\numberwithin{equation}{section}  
\begin{document}

\renewcommand{\theenumi}{(\roman{enumi})}

\newcommand{\mydp}[1]{\displaystyle{#1}}
\newcommand{\bs}[1]{\boldsymbol{#1}}
\def\widebar#1{\overline{#1}}

\newtheorem{lemma}{Lemma}
\newtheorem{cor}{Corollary}
\newtheorem{thm}{Theorem}
\newtheorem{prop}{Proposition}
\renewcommand{\theequation}{\thesection.\arabic{equation}}
\renewcommand{\thelemma}{\thesection.\arabic{lemma}}
\renewcommand{\theprop}{\thesection.\arabic{prop}}
\renewcommand{\thethm}{\thesection.\arabic{thm}}
\renewcommand{\thecor}{\thesection.\arabic{cor}}
\newcommand\newsec{%
\setcounter{equation}{0}%
\setcounter{lemma}{0}%
\setcounter{prop}{0}%
\setcounter{thm}{0}%
\setcounter{cor}{0}%
}%

\renewcommand{\theequation}{\thesection.\arabic{equation}}

\newcommand{\al}{\alpha}
\newcommand{\om}{\omega}
\newcommand{\Om}{\Omega}
\newcommand{\La}{\Lambda}
\newcommand{\la}{\lambda}
\newcommand{\De}{\Delta}
\newcommand{\de}{\delta}
\newcommand{\ep}{\epsilon}
\newcommand{\be}{\beta}
\newcommand{\ga}{\gamma}
\newcommand{\Ga}{\Gamma}
\newcommand{\te}{\theta}
\newcommand{\Te}{\Theta}
\newcommand{\si}{\sigma}
\newcommand{\Si}{\Sigma}
\newcommand{\vep}{\varepsilon}
\newcommand{\veps}{\varepsilon}
\newcommand{\eps}{\epsilon}
\newcommand{\ze}{\zeta}
\newcommand{\T}{\mathrm{\tiny T}} 
\newcommand{\pa}{\partial}
\newcommand{\dd}[2]{\frac{\pa #1}{\pa #2}} 

\newcommand{\Z}{\mathbb{Z}}
\newcommand{\R}{\mathbb{R}}
\newcommand{\CC}{\mathbb{C}}   
\newcommand{\Q}{\mathbb{Q}}
\newcommand{\N}{\mathbb{N}}

\def\cov{\mathop{\hbox{\rm cov}}}
\def\var{\mathop{\hbox{\rm var}}}

\newcommand{\PP}{\; \mathbb{P}}
\newcommand{\E}{\;  \mathbb{E}}

\newcommand{\rw}{\rightarrow}
\newcommand{\lrw}{\longrightarrow}
\newcommand{\Rw}{\Rightarrow}
\newcommand{\Lrw}{\Longrightarrow}


\def\minmax{\mathop{\hbox{\rm min\,max}}}
\def\tr{\mathop{\hbox{\rm tr}}}
\def\ln{\mathop{\hbox{\rm $\ell n$}}}
\def\Arg{\mathop{\hbox{\rm Arg}}}
\def\diag{\mathop{\hbox{\rm diag}}}
\def\grad{\mathop{\nabla}}
\def\card{\mathop{\hbox{\rm card}}}
\def\bc{\mathop{\hbox{\mbox{\large\rm $|$}}}}
\def\telque{\; : \;} 
\def\pds#1#2{{\langle #1,#2 \rangle}}         
\def\bpv{\mathop{;}}                          
\def\dfrac#1#2{\frac{\displaystyle #1}{\displaystyle #2}}
\def\ffrac#1{\frac{1}{#1}}
\newcommand{\norm}[2]{ \| #1 \|_{#2} }
\def\un{\mbox{\large 1\hskip-0.30em I}} 
\def\indic#1{\un_{#1}}                        
\def\cf{{\em cf. }} 
\def\sgn{\mathop{\hbox{\rm sgn}}}
\newcommand{\rmx}[1]{{\mbox{\rm #1}}}
\newcommand{\rmxs}[1]{{\mbox{\rm {\scriptsize #1}}}}
\newcommand{\rmxt}[1]{{\mbox{\rm {\tiny #1}}}}
\newcommand{\trsp}[1]{{{#1}^\T}}                 
\newcommand{\fq}[2]{\trsp{#1} #2 #1}

\newcommand{\eq}[1]{(\ref{#1})}         
\newcommand{\cte}{\rmx{cte }}                

\def\slabel#1{\label{#1} \hfill \hskip2mm\hbox to 4mm{\dotfill}%
        \mbox{\tiny\em(#1)}\hbox to 4mm{\dotfill}}

\def\ddemo#1{\noindent{\textbf{Proof {#1}. \quad}}}
\def\findemo{\hskip3mm\mbox{\quad\vbox{\hrule height 3pt depth 6pt width 6pt}}}

\renewcommand{\theenumi}{\roman{enumi}}
\newcommand\cvp{\stackrel{P}{\rw}}
\newcommand\cvps{\stackrel{a.s.}{\rw}}
\def\cvloi{\stackrel{{\mathcal{D}}}{\Longrightarrow}}
\newcommand\Cb{\mathscr{C}_b}
\newcommand\bbx{{\bf x}}
\newcommand\bbu{{\bf u}}
\newcommand\CN{\mathscr{N}}
\newcommand\cK{\mathscr{K}}
\newcommand{\bars}{{\underline s}}
\newcommand\barSn{{\underline S}_n}
\newtheorem{defi}{Definition}[section]
\newcommand{\bbeta}{{\mbox{\boldmath$\beta$}}}
\newcommand{\bbzeta}{{\mbox{\boldmath$\zeta$}}}
\newcommand{\bbtheta}{{\mbox{\boldmath$\theta$}}}
\newcommand{\bdeta}{{\mbox{\boldmath$\eta$}}}
\newcommand{\bgl}{{\mbox{\boldmath$\lambda$}}}
\newcommand{\bbell}{{\mbox{\boldmath$\ell$}}}
\newcommand{\bbxi}{{\mbox{\boldmath$\xi$}}}

\title[Generalized spiked population model]{Limit theorems for sample eigenvalues in a generalized spiked population model}
\author{Zhidong \textsc{Bai}}
\address{Zhidong Bai \\ 
KLASMOE and School of Mathematics and Statistics\\
Northeast Normal University \\
  5268 People's Road\\
  130024 Changchun, China\\
    and\\
    Department of Statistics and Applied Probability\\
    National University of Singapore\\
    10, Kent Ridge Crescent\\
	Singapore 119260
}
\email{stabaizd@nus.edu.sg}
\thanks{The research of this Zhidong \textsc{Bai} was supported by CNSF grant 10571020 and NUS grant R-155-000-061-112}
    
\author{{Jian-feng} \textsc{Yao}}
\address{Jian-feng Yao \\
  IRMAR and Universit\'e de Rennes 1\\
  Campus de Beaulieu\\
  35042   Rennes Cedex, France\\
}
\email{jian-feng.yao@univ-rennes1.fr}
\thanks{Research was (partially) completed while J.-F. \textsc{Yao}
    was visiting 
    the Department of Statistics and Applied Probability,  National University of Singapore
    in March 2007}

\keywords{Sample covariance matrices, {Spiked population model},
 limit theorems, {Largest eigenvalue}, {Extreme eigenvalues} 
}
\subjclass{Primary 60F15, 60F05; secondary 15A52, 62H25}

\begin{abstract}
     \quad   In the  spiked population model introduced by
     \citet{Johnstone01}, 
     the population covariance matrix has all its eigenvalues 
     equal to unit except for a few fixed eigenvalues (spikes).  
     The question is to quantify the effect of the perturbation caused by the spike
	 eigenvalues.  \citet{BaikSilv06}  establishes 
	 the almost sure limits of the extreme sample eigenvalues associated to the
	 spike eigenvalues when the population and the sample sizes become
	 large.  In a recent work \cite{BaiYaoIHP}, we have provided  the limiting 
	 distributions for  these extreme sample eigenvalues. 
     In this paper, we extend this theory to a {\em generalized} spiked
     population model where the base population covariance matrix is
     arbitrary, instead of the identity matrix as in Johnstone's
     case. New mathematical tools are introduced for establishing the almost sure
     convergence of the sample eigenvalues generated by the spikes. 
\end{abstract}

\maketitle 

\section{Introduction} \label{sec:intro}
Let $(T_p)$ be a sequence of $p\times p$ non-random and nonnegative
definite Hermitian matrices and let $(w_{ij})$, $i,j\ge 1$ be a
doubly infinite array of i.i.d. complex-valued random variables
satisfying
\[  \label{eq:moments}
  \E(w_{11})=0,~~
  \E(|w_{11}|^2)=1,~~
  \E(|w_{11}|^4)<\infty.
\]
Write $Z_n=(w_{ij})_{1\le i\le p,1\le j \le n}$, the upper-left
$p\times n$ bloc,  where $p=p(n)$ is
related to $n$ such that when $n\rw \infty$, $p/n\rw y >0$.
 Then the matrix
$S_n=\frac1n T_p^{1/2} Z_nZ_n^*T_p^{1/2}$ can be considered as the
sample covariance matrix of an i.i.d. sample
$(\bbx_1,\ldots,\bbx_n)$ of $p$-dimensional observation vectors
$\bbx_j=T_p^{1/2}{\bbu_j}$ where ${\bbu_j}=(w_{ij})_{1\le i\le p}$
denotes the $j$-th column of $Z_n$. Throughout the paper, $A^{1/2}$
stands for any Hermitian square root of an nonnegative definite
(n.n.d.) Hermitian matrix $A$.

Assume that the empirical spectral
distribution (ESD) of $T_p$ converges weakly to a nonrandom
probability distribution $H$ on $[0,\infty)$. It is then well-known
that the ESD of $S_n$ converges to a nonrandom limiting spectral
distribution (LSD) $G$ \citep{MP,Silverstein95}.

Let $\la_{n,1} \ge \cdots\ge\la_{n,p}$ be the set of sample
eigenvalues, i.e. the eigenvalues of the sample covariance matrix
$S_n$.
The so-called {\em null case} corresponds to the situation $T_p\equiv
I_p$, so that, assuming $y\le 1$, the LSD $G$  reduces to the Mar\v{c}enko-Pastur law with
support $\Ga_G=[a_y,b_y]$ where $ a_y=(1-\sqrt y)^2$ and
$b_y=(1+\sqrt y)^2$.
Furthermore, the extreme sample eigenvalues
$\la_{n,1}$  and $\la_{n,p}$ almost surely tend to  $b_y$
and  $a_y$, respectively,  and
 the sample
eigenvalues $(\la_{n,j})$ fill completely the interval $[a_y,b_y]$.
However, as pointed out by  \citet{Johnstone01}, many empirical data
sets demonstrate a significant deviation from this null case since
some of sample  extreme eigenvalues are well separated from an
inner bulk interval. As a way for possible explanation of such
phenomenon, Johnstone proposes a {\em spiked population model}
where all eigenvalues of $T_p$ are unit except a fixed and relatively
small number among them ({\em spikes}).
In other words,  the population eigenvalues $\{\be_{n,j}\}$ of $T_p$ are
\[
  \underbrace{\al_1,\ldots,\al_1}_{n_1},
  \ldots,
  \underbrace{\al_K,\ldots,\al_K}_{n_K},
  \underbrace{ 1,\ldots,1}_{p-M},
\]
where $M$ is fixed as well as the multiplicity numbers $(n_k)$ which
satisfy $n_1+\cdots+n_K=M$. Clearly, this  spiked population model
can be viewed as a {finite-rank  perturbation of the
null case}.

Obviously, the LSD $G$ of  $S_n$ is not affected by this small
perturbation,  still equals  to the Mar\v{c}enko-Pastur law.
However,  the asymptotic behavior of the extreme eigenvalues of
$S_n$ is significantly different from  the  { null case}. The
fluctuation of  the largest eigenvalue $\la_{n,1}$ in case of
complex Gaussian variables has been recently studied in
\citet{BBP05}. These authors prove a transition phenomenon:   the
weak limit as well as the scaling of $\la_{n,1}$ is different
according to   its location with respect to a critical value
$1+\sqrt y$. In \citet{BaikSilv06}, the authors consider the spiked
population model with general random variables:  complex or real and
not necessarily Gaussian. For   the almost sure limits of the
extreme sample eigenvalues, they   also find   that these limits
depend on the critical values $1+\sqrt y$ for largest sample
eigenvalues, and on $1-\sqrt y$ for smallest ones. For example,  if
there are $m$  eigenvalues in the population covariance matrix
larger than $1+\sqrt y$, then the $m$ largest sample eigenvalues
$\la_{n,1},\ldots,\la_{n,m}$
 will converge to a limit above  the right edge $b_y$  of the limiting
Mar\v{c}enko-Pastur law, see \S\ref{ssec:Johnstone} for more details.
 In a recent work \citet{BaiYaoIHP},
considering general random matrices  as in
\cite{BaikSilv06}, we have established central limit theorems for
these extreme sample eigenvalues generated by spike eigenvalues
which are outside the critical interval $[1-\sqrt y,1+\sqrt y]$.

The spiked population model has also an extension to other random
matrices ensembles through the general concept of small-rank
perturbations. The goal is again to examine the effect caused on
the sample extreme   eigenvalues by such perturbations. 
In a series of recent papers \cite{Peche06,FerPec07,CapDonFer07},
these authors establish several results in this vein 
for  ensembles of form  
$M_n = W_n + n^{-1/2}V$ where $W_n$ is a standard Wigner matrix and
$V$ a small-rank matrix.

The present work is motivated by a generalization of Johnstone's
spike population model defined as  follows. The population
covariance matrix $T_p$ posses two sets of eigenvalues:  a small
number of them, say $(\al_k)$,  called {\em
  generalized spikes},
are well separated - in a sense to be defined later-,  from  a base
set $(\be_{n,i})$.
   In other words,
the spectrum of $T_p$ reads as
\[
  \underbrace{\al_1,\ldots,\al_1}_{n_1},
  \ldots,
  \underbrace{\al_K,\ldots,\al_K}_{n_K},
   \be_{n,1},\ldots,\be_{n,p-M}.
\]
Therefore, this scheme can be viewed as a {finite-rank
perturbation of a general population  covariance matrix} with
eigenvalues $\{\be_{n,j}\}$.

The empirical distributions generated by
the eigenvalues $(\be_{n,i})$ will be assumed to have a
 limit distribution  $H$.
Note that $H$ is also the LSD of $T_p$ since the perturbation is of finite rank.
Analogous to Johnstone's spiked population model,
the LSD $G$ of the sample covariance matrix $S_n$ is still not affected by the
spikes.
The aim of this work is to identify the effect  caused by the spikes $(\al_k)$ on a particular
subset of sample eigenvalues.  The results obtained here  extend
those  of \citep{BaikSilv06,BaiYaoIHP} to the present generalized
scheme.

The remaining sections of the paper are   organized as following.
\S \ref{sec:model} gives the precise definition of the
generalized spiked population model.
Next, we use \S\ref{sec:known} to recall several useful results on the
convergence of the E.S.D. from general sample covariance matrices.
 In \S\ref{sec:asconv}, we examine the strong point-wise convergence
of sample eigenvalues associated to spikes.
We then establish CLT for these sample eigenvalues in \S \ref{sec:CLT}
using the methodology developed in \cite{BaiYaoIHP}.
Preliminary lemmas and their proofs are gathered  in the last
section.

\section{Generalized spiked population model}
\label{sec:model}
In a generalized spiked population model,   the population
covariance matrix $T_p$  takes the form
\[ T_p=
\begin{pmatrix}
  \Si  &  0 \\
   0   &  V_{p}
\end{pmatrix},
\]
where $\Si$ and $V_{p}$ are nonnegative and nonrandom Hermitian
matrices of dimension $M\times M$ and $p'\times p'$,
respectively, where $p'=p-M$.
 The submatrix $\Si$ has $K$ eigenvalues
$\al_1>\cdots>\al_K>0$ of respective  multiplicity  $(n_k)$,
and $V_p$ has $p'$ eigenvalues
$\be_{n,1}\ge \cdots \ge \be_{n,p'}$.

Throughout the paper, we assume that the following  assumptions hold.
\begin{enumerate}
\item[(a)]
  $w_{ij}$, $i,j=1,2,...$ are i.i.d. complex random variables
  with $E w_{11}=0$, $E|w_{11}|^2=1$, and
  $E|w_{11}|^4<\infty$.
\item[(b)]
   $n=n(p)$ with    $y_n=p'/n\to y>0$ as $n\to\infty$.
\item[(c)]
  The sequence of  ESD  $H_n$ of $(T_p)$, i.e. generated by the
  population eigenvalues $\{\al_k,\be_{n,j} \} $,
  weakly converges to a probability distribution  $H$
  as $n\to\infty$.
\item[(d)]
  The sequence $(\|T_p\|)$ of  spectral norms of $(T_p)$ is bounded.
\end{enumerate}

For any measure $\mu$ on $\R$, we denote by  $\Gamma_\mu$ the   support of $\mu$, a
close set.

\begin{defi}
  An  eigenvalue $\al$ of the matrix $\Si$ is called a {\em
  generalized spike eigenvalue} if  $\al\notin \Ga_H$.
\end{defi}

To avoid confusion between spikes and non-spike eigenvalues, we
further assume that
\begin{enumerate}
\item[(e)]
$ \max\limits_{1\le j\le p'}d(\beta_{nj},\Gamma_H)=\varepsilon_n\to0$,
\end{enumerate}
where $d(x,A)$ denotes the distance of a  point $x$ to a set
$A$.
Note that there is a positive constant $\delta$ such that
$d(\al_k,\Gamma_H)>\delta$, for all $k\le K$.

The above definition for generalized spikes is
 consistent with Johnstone's original one of
(ordinary) spikes, since in that case we have $H_n\equiv
H=\de_{\{1\}}$ and $\al\notin \Ga_H$ simply means $\al\ne 1$.

Let us decompose the observation vectors $\bbx_j=T_p^{1/2}{\bbu_j}$, $j=1,\ldots,n$,
where ${\bbu_j}=(w_{ij})_{1\le i\le p}$ 
by blocs,
\[  \bbx_j=
\begin{pmatrix}
   \bbxi_j  \\ \bdeta_j
\end{pmatrix},
\quad \textrm{with} \quad \bbxi_j= \Si^{1/2} (w_{ij})_{1\le i\le
M},\quad \bdeta_j= V_{p}^{1/2} (w_{ij})_{M<i\le p}.
\]
Note that both sequences $\{\bbxi_1,\ldots, \bbxi_n\}$ and
$\{\bdeta_1,\ldots, \bdeta_n\}$ are i.i.d. sequences. We also denote
the coordinates of $\bbxi_1$ by $\bbxi_1=(\xi(1),\ldots,\xi(M))^T$.

Similarly, the  sample covariance matrix $S_n=\frac1n
T_p^{1/2}Z_nZ_n^*T_p^{1/2}$ is decomposed as
\[
  S_n=\begin{pmatrix}
  S_{11}&S_{12}\\
  S_{21}&S_{22}
  \end{pmatrix}
  =
  \begin{pmatrix}
    X_1X_1^*&X_1X_2^*\\  X_2X_1^* &X_2X_2^*
  \end{pmatrix}~,
\]
with
\[
X_1=\frac1{\sqrt n} (\bbxi_1,\cdots,\bbxi_n)_{M\times n}
=\frac1{\sqrt n}\bbxi_{1:n} , \ X_2=\frac1{\sqrt n}
(\bdeta_1,\cdots,\bdeta_n)_{p'\times n}= \frac1{\sqrt n}\bdeta_{1:n}
~.
\]

Throughout the paper and for any Hermitian matrix $A$, we order its
eigenvalues in an descending order as $\la_1^A \ge\la_2^A\ge
\cdots.$ By definition, the sample eigenvalues $
\{\la^{S_n}_{j},1\le j\le p\}$ are solutions to the equation
\begin{equation}
  \label{eigenf}
  0 = |\lambda I -S_n|=| \lambda I - S_{22}| ~
  |\lambda I -K_n(\la)|~,
\end{equation}
with a random sesquilinear form
\begin{equation}
  K_n(\la)= S_{11} + S_{12}(\lambda I-S_{22})^{-1}S_{21}.
  \label{Klambda}
\end{equation}
Note that the factorization \eq{eigenf} holds for any $\la\notin\mathop{\textrm{spec}}(S_{22})$.
This identity will play a central role in our analysis.



\section{Known results on the spectrum of large sample covariance matrices}\label{sec:known}

\subsection{Mar\v{c}enko-Pastur distributions}\label{ssec:MP}
In this section $y$ is an arbitrary positive constant  and
$H$ an arbitrary  probability measure  on $\R^+$.
Define on the set
\[ \CC^+:= \{ z\in \CC ~:~ \Im(z) >0~    \}~,
\]
the map
\begin{equation}
  \label{map-g}
  g(s) = g_{y,H}(s)= - \frac1 s  +  y \int\!\frac{t}{1+ts} dH(t)~,\quad s\in\CC^+.
\end{equation}
It is well-known (\citep[Chap. 5]{BSbook}) that $g$ is a
one-to-one map from $\CC^+$ onto itself, and
the inverse map $m=g^{-1}$ corresponds to the
Stieltjies transform of  a probability measure $F_{y,H}$ on
$[0,\infty)$.  Throughout the paper and with a small abuse of
  language,
we refer    $F_{y,H}$ as the
Mar\v{c}enko-Pastur (M.P.) distribution  with indexes $(y,H)$.

This family of distributions arises naturally as follows. Consider a
companion matrix $\barSn=\frac1n Z_n^* T_p Z_n$ of the sample
covariance matrix $S_n$. The spectra of $S_n$ and $\barSn$ are
identical except $|n-p|$ zeros. It is then well-known
(\citep{MP},\citep[Chap. 5]{BSbook}) that under Conditions (a)-(d),
the E.S.D. of $\barSn$ converges to the M.P. distribution $F_{y,H}$.
The terminology is slightly ambiguous since the classical M.P.
distribution refers to the limit of the E.S.D. of $S_n$ when
$T_p=I_p$.

Note that   we shall
always extend a function $h$ defined on $\CC^+$ to the real axis
$\R$ by taking the limits
$\lim_{\varepsilon\to 0_+} h(x+i\varepsilon)$ for real $x$'s whenever
these limits exist.  For  $\al \notin\Ga_H$ and  $\al\ne 0$ define
\begin{equation}
  \label{eq:psi}
  \psi(\al)=\psi_{y,H}(\al) := g(-1/\al) =
  \al +    y \al \int\!\frac{t}{\al-t} dH(t)~.
\end{equation}
Note that even though this formula could be extended to  $\al=0$
when $0\notin \Ga_H$, as we will see below that $\al$ is related to the
$-1/m$ where $m$ is a Stieltjies transform, so that there is no
much meaning for  $\al=0$. Therefore,
the point 0 will  always be excluded from the domain of definition of
$\psi$.

Analytical properties of  $F_{y,H} $ can be derived from
the fundamental equation \eq{eq:psi}.
The following lemma, due to    \citet{SilversteinChoi95},
characterizes  the close relationship between
the supports of  the generating measure $H$
and the generated M.P. distribution $F_{y,H}$.

\begin{lemma}\label{lem:supp2}
  If  $ \la  \notin \Ga_{F_{y,H}}$,
  then   $m(\la)\ne 0$ and $\al=-1/m(\la)$  satisfies
  \begin{enumerate}
  \item
    $ \al  \notin \Ga_H $ and  $\al\ne 0$  (so that $\psi(\al)$ is well-defined); \label{alphaitem1}
  \item
    $ \psi'(\al)>0 $.  \label{alphaitem2}
  \end{enumerate}
  Conversely, if $\al$  satisfies {\em \eq{alphaitem1}-\eq{alphaitem2}}, then
  $ \la = \psi(\al) \notin \Ga_{F_{y,H}}$.
\end{lemma}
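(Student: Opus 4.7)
My starting point is the inversion identity $g(m(\la))=\la$, valid on $\CC^+$ by definition of $m=g^{-1}$, together with the standard fact that for any $\la_0\notin\Gamma_{F_{y,H}}$ the Stieltjes transform $m$ extends real-analytically to a neighborhood of $\la_0$ on the real axis, with $m'(\la_0)=\int (t-\la_0)^{-2}\,dF_{y,H}(t)>0$. The map $\psi$ is simply $g$ composed with the change of variable $s=-1/\al$, so all assertions will be pulled back from assertions about $g$.

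\textbf{Direct implication.} Assume $\la\notin \Gamma_{F_{y,H}}$. First, $m(\la)\ne 0$: indeed, on $\R\setminus\Gamma_{F_{y,H}}$ the real Stieltjes transform $\la\mapsto m(\la)=\int(t-\la)^{-1}dF_{y,H}(t)$ vanishes only in the limit $\la\to\pm\infty$, so it cannot vanish at finite $\la$. Set $s=m(\la)$ and $\al=-1/s$; I must show $\al\notin \Ga_H$ and $\psi'(\al)>0$. For the first, note that $g(s)=\la$ is finite and well-defined; but the integrand in \eq{map-g} has a pole at $t=-1/s=\al$, so if $\al$ lay in $\Ga_H$, the integral $\int t/(1+ts)\,dH(t)$ would diverge, contradicting finiteness of $g(s)$. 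For the second, differentiate the identity $g(m(\la))=\la$ at $\la$ to get $g'(s)\,m'(\la)=1$, hence $g'(s)=1/m'(\la)>0$; pulling back through $\al\mapsto -1/\al$ yields $\psi'(\al)=g'(-1/\al)/\al^2>0$.

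\textbf{Converse.} Assume $\al\notin\Ga_H$, $\al\ne 0$, and $\psi'(\al)>0$. Let $s_0=-1/\al$ and $\la_0=\psi(\al)=g(s_0)$. Because $\al\notin\Ga_H$, the integral in \eq{map-g} defines a real-analytic function of $s$ in a complex neighborhood of $s_0$, so $g$ itself is analytic there; moreover $g'(s_0)=\al^2\psi'(\al)>0$. By the inverse function theorem, $g$ is a conformal equivalence between small complex disks around $s_0$ and $\la_0$, mapping real points to real points and (since $g'(s_0)>0$) the upper half-disk to the upper half-disk. Denote the local inverse by $\phi$. On the intersection of this neighborhood with $\CC^+$, the function $\phi$ and the globally defined Stieltjes transform $m$ both invert $g$ and are analytic with values in $\CC^+$; since $g$ is one-to-one on $\CC^+$ they must agree. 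Hence $m$ extends continuously to the real neighborhood of $\la_0$ with real values, and $\Im m(x+i\vep)\to 0$ locally uniformly there as $\vep\downarrow 0$. By the Stieltjes inversion formula this forces $F_{y,H}$ to put no mass in that neighborhood, so $\la_0\notin\Gamma_{F_{y,H}}$.

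\textbf{Main obstacle.} The only delicate point is the identification, in the converse direction, of the local analytic inverse $\phi$ with the Stieltjes transform $m$; this requires a careful application of the identity principle on $\CC^+$ together with the observation that the local branch selected by $g'(s_0)>0$ is the one sending the upper half-plane to the upper half-plane, so the two functions truly live on the same sheet. Everything else is a direct calculus check relying on the strict monotonicity of $m$ outside its support and on the explicit form \eq{map-g} of $g$.
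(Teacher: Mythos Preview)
The paper does not give its own proof of this lemma: it is quoted as a known result due to Silverstein and Choi (1995), with no argument supplied.  So there is no ``paper's proof'' to compare against; your attempt must stand on its own.

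Your converse direction is essentially correct and is in fact the standard argument: analyticity of $g$ near $s_0=-1/\al$ (which holds because $\al\notin\Ga_H$), positivity of $g'(s_0)$, the local inverse mapping upper half to upper half, and identification with $m$ via global injectivity of $g$ on $\CC^+$, followed by Stieltjes inversion.  That part is fine.

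The direct implication, however, has two genuine gaps.  First, your justification that $m(\la)\ne 0$ is wrong as stated: the Stieltjes transform of a general probability measure \emph{can} vanish at a finite real point lying in a bounded gap of the support (e.g.\ $\tfrac12\delta_{-1}+\tfrac12\delta_{1}$ has Stieltjes transform vanishing at $0$).  What saves you here is the functional equation itself: if $m(\la_n)\to 0$ along a sequence $\la_n\to\la_0$ in $\CC^+$, then $g(m(\la_n))=\la_n$ forces $-1/m(\la_n)\to\infty$ while the integral term in $g$ stays bounded, contradicting $\la_n\to\la_0$ finite.  You should use this, not the incorrect general claim.

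Second, and more seriously, the argument that $\al\in\Ga_H$ would make the integral $\int t/(1+ts_0)\,dH(t)$ diverge is not valid.  If $\al$ is a non-atomic limit point of $\Ga_H$ (say $H$ has atoms at $\al+1/n^2$ with masses decaying like $n^{-6}$), that integral---and even $\int t^2/(\al-t)^2\,dH(t)$---can be perfectly finite.  You are also silently using $g(m(\la_0))=\la_0$ on the real axis, whose continuity at $s_0=m(\la_0)$ already presupposes $\al\notin\Ga_H$, so the reasoning is circular.  The rigorous route (Silverstein--Choi) takes imaginary parts of $g(m(\la))=\la$ for $\la\in\CC^+$ to obtain
\[
y\int \frac{t^2}{|1+t\,m(\la)|^2}\,dH(t)\;<\;\frac{1}{|m(\la)|^2},
\]
passes to the limit with Fatou, and then uses a further argument exploiting the analyticity and strict monotonicity of $m$ on a full real neighborhood of $\la_0$ to rule out $\al\in\Ga_H$.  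Your one-line divergence claim does not substitute for this.
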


It is then possible to determine the support of $F_{y,H}$ by looking
at intervals where $\psi'>0$. As an example,
Figure~\ref{fig:psi} displays the function  $\psi$
for the M.P. distribution  with  indexes
$y=0.3$ and  $H$ the uniform distribution on the set
$\{1,4,10\}$.
The function $\psi$ is  strictly
increasing on the following
intervals:
     ($-\infty$, 0), (0, 0.63), (1.40, 2.57)  and   (13.19, $\infty$).
According to Lemma~\ref{lem:supp2}, we get
\[   \Ga_{F_{y,H}} \cap \R^*   =  (0, ~0.32) \cup   (1.37, ~   1.67) \cup  (18.00,~\infty).
\]
Hence, taking into account that 0 belongs to the support of $F_{y,H}$,
we have
\[ \Ga_{F_{y,H}} = \{0\} \cup  [0.32,~ 1.37] \cup
   [1.67,~18.00].
\]

We refer to \citet{BS99b}  for a complete account  of  analytical
properties of the family of   M.P. distributions $\{F_{y,H}\}$
and the maps $\{\psi_{y,H}\}$.  In particular,  the following
conclusions will be useful:
\begin{itemize}
\item
  when restricted to
  $\Ga^c_{F_{y,H}}$, $\psi_{y,H} $ has a well-defined inverse function
  $\psi^{-1}_{y,H}$: $\Ga^c_{F_{y,H}} \to \Ga^c_H$ which is strictly
  increasing;
\item
 the family  $\{F_{y,H}\}$
 is continuous in its  index parameters $(y,H)$ in a wide sense.
 For example,  $\{\psi_{y,H} \}$ tends to the identity function
 as  $y\to 0$.
%
\end{itemize}

\subsection{Exact separation of sample eigenvalues}

We need first quote two  results  of  \citet{BS98,BS99b} on exact separation of
sample eigenvalues.
Recall the ESD's  $(H_n)$ of $(T_p)$,  $y_n=p/n$,  and let
$\{ F_{y_n,H_n} \}$  be the sequence of associated M.P. distributions.
One should not confuse the  M.P. distribution $\{ F_{y_n,H_n} \}$
with the E.S.D. of $\barSn$ although both converge to the
M.P. distribution  $F_{y,H}$ as $n\rw\infty$.

\begin{prop}\label{prop1}
Assume hold Conditions (a)-(d) and the following
\begin{enumerate}
\item [(f)]
  The interval $[a,b]$ with $a>0$
  lies in an open interval $(c,d)$ outside the support of
  $F_{y_n,H_n}$ for all large $n$.
\end{enumerate}
Then
\[
P(\mbox{ no eigenvalue of $S_n$ appears in $[a,b]$ for all large
  $n$ })=1.
\]
\end{prop}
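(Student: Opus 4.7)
The approach I would take is the Stieltjes-transform method of \cite{BS98}: establish almost-sure convergence of the sample Stieltjes transform to its deterministic equivalent, uniformly on a contour enclosing $[a,b]$, and then count sample eigenvalues inside $[a,b]$ via an argument-principle integral.

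First, using the fourth-moment hypothesis (a), I would perform the standard truncation, centralisation and rescaling of the entries $w_{ij}$, so that one may assume $|w_{ij}|\le\delta_n\sqrt n$ with $\delta_n\to 0$ while keeping $\E w_{ij}=0$ and $\E|w_{ij}|^2=1$; these operations perturb $S_n$ by a matrix of almost-surely negligible operator norm and therefore preserve any ``no eigenvalue in $[a,b]$'' statement. Since $a>0$, the spectra of $S_n$ and of the companion matrix $\barSn=\frac1n Z_n^*T_pZ_n$ coincide on $[a,b]$, so it suffices to exclude eigenvalues of $\barSn$ from this interval.

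Let $\underline{m}_n(z)$ denote the Stieltjes transform of the E.S.D. of $\barSn$ and $\underline{m}_n^0(z)$ that of the companion M.P. distribution $\underline{F}_{y_n,H_n}$. The heart of the argument is the pair of bounds
\[
\E\bigl|\underline{m}_n(z)-\E\underline{m}_n(z)\bigr|^4\le C\,n^{-2},\qquad \bigl|\E\underline{m}_n(z)-\underline{m}_n^0(z)\bigr|=O(n^{-1}),
\]
uniformly in $z$ on a closed contour $\Ga_0$ encircling $[a,b]$ and contained in the analyticity domain of $\underline{m}_n^0$ (condition (f) makes such a $\Ga_0$ available for all large $n$). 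The first estimate follows from the column-martingale decomposition of $\tr(\barSn-zI)^{-1}$ combined with Burkholder's inequality, with the truncation controlling the higher-order contributions. The second is the deterministic-equivalent estimate obtained by inserting $\underline{m}_n^0$ into the resolvent identity and bounding the resulting quadratic forms $\bbu_j^*(\barSn^{(j)}-zI)^{-1}\bbu_j$ via the trace lemma. Borel--Cantelli and a standard Montel/normal-families argument then yield $\underline{m}_n(z)-\underline{m}_n^0(z)\to 0$ almost surely, uniformly on $\Ga_0$.

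Finally, the number of eigenvalues of $\barSn$ lying in $[a,b]$ equals
\[
N_n=-\frac{1}{2\pi i}\oint_{\Ga_0}\underline{m}_n(z)\,dz,
\]
and condition (f) forces $\oint_{\Ga_0}\underline{m}_n^0(z)\,dz=0$. Passing to the limit under the contour integral gives $N_n\to 0$ almost surely, and since $N_n$ is a non-negative integer this yields $N_n=0$ eventually, which is the claim. The main obstacle is the $O(n^{-1})$ bias control, and more acutely its validity \emph{uniformly} in $z$ on a contour that can be pushed arbitrarily close to the real axis outside $\Ga_{F_{y_n,H_n}}$. Securing this near-the-axis resolvent control is precisely where the delicate machinery of \cite{BS98,BS99b} is needed; once it is in hand, the contour-integration step is routine.
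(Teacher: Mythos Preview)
First, note that the paper does not supply its own proof of this proposition: it is quoted verbatim as a result of \cite{BS98,BS99b}, so there is no in-paper argument to compare against. Your sketch is indeed aimed at the Bai--Silverstein machinery, which is the right target.

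However, there is a genuine gap in your final step. The Stieltjes transform of the ESD of $\barSn$ is normalised by $n$: each eigenvalue $\la_j$ contributes a pole of residue $-1/n$ to $\underline m_n$. Hence the contour integral you wrote computes
\[
-\frac{1}{2\pi i}\oint_{\Ga_0}\underline m_n(z)\,dz \;=\; \frac{N_n}{n},
\]
not $N_n$. Uniform convergence $\underline m_n-\underline m_n^0\to 0$ on a fixed contour therefore only yields $N_n/n\to 0$, which is just the soft statement that $[a,b]$ carries no mass under the LSD; it says nothing about the absence of individual stray eigenvalues. Even granting your stated rates, the bias bound $O(n^{-1})$ and the fluctuation bound coming from $\E|\cdot|^4\le Cn^{-2}$ give at best $n(\underline m_n-\underline m_n^0)=O(1)$ and $O(n^{1/2})$ respectively, so the integer argument cannot close.

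The endgame in \cite{BS98} is different: one lets $z=x+iv_n$ with $v_n\downarrow 0$ at a carefully chosen polynomial rate and proves high-moment bounds (for arbitrary moment order, exploiting the truncation) that force $\sup_{x\in[a,b]}|\underline m_n(x+iv_n)-\underline m_n^0(x+iv_n)|$ to vanish faster than $1/(nv_n)$. Since a single eigenvalue $\la\in[a,b]$ would force $\Im\,\underline m_n(\la+iv_n)\ge 1/(nv_n)$ while $\Im\,\underline m_n^0$ is $O(v_n)$ there, one obtains a contradiction. You correctly flag the near-axis control as the crux, but the mechanism for converting it into ``no eigenvalues'' is this imaginary-part pigeonhole, not a fixed-contour residue count.
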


Roughly speaking,  Proposition~\ref{prop1} states that a gap in the
spectra of the $F_{y_n,H_n}$'s  is also a  gap in  the spectrum  of $S_n$ for
large $n$. Moreover, under    Condition (f), we know
by Lemma~\ref{lem:supp2},
that  for large $n$,
\[    \psi_{y_n,H_n}^{-1} \{ [a,b]\}
  \subset \psi_{y_n,H_n}^{-1} \{ (c,d)\}
  \subset \Ga^c_{H_n} ~.
\]
By continuity of $F_{y_n,H_n}$ in its indexes, it follows that we
have for large $n$
\[\psi^{-1} \{ [a,b]\} =  \psi_{y,H}^{-1} \{ [a,b]\}   \subset \Ga^c_{H_n}~.
\]
In other words, it holds almost surely and for large $n$ that,
 $\psi^{-1} \{[a,b]\}$ contains no eigenvalue of $T_p$.
Let for these  $n$, the integer $i_n\geq 0$ be  such that
\begin{equation}
  \label{interval2}
  \textrm{$T_p$ has exactly  $i_n$ eigenvalues larger than }  \psi^{-1}(b)~.
\end{equation}

\begin{prop}\label{prop2}
  Assume Conditions (a)-(d) and (f)  hold.
  If $y[1-H(0)]\leq1$, or $y[1-H(0)]>1$ but $[a,b]$ is
  not contained in $[0,x_0]$ where $x_0>0$ is the smallest value of
  the support of $F_{y,H}$,
  then    with  $i_n$ defined  in \eq{interval2} we have
  \[
  P(  \lambda_{i_n+1}^{S_n} \le a <b \le \lambda_{i_n}^{S_n} \quad\mbox{for all large $n$})=1.
  \]
\end{prop}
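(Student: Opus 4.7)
The plan is to combine Proposition~\ref{prop1} with a continuity-and-counting argument à la Bai--Silverstein. By Proposition~\ref{prop1}, almost surely no eigenvalue of $S_n$ falls in $[a,b]$ for all large $n$. Once this is granted, it suffices to show that
\[
\#\{\, j \telque \la^{S_n}_j > b \,\}=i_n \quad\text{a.s.\ for all large } n,
\]
since this identity, together with the absence of sample eigenvalues in $[a,b]$, is exactly the statement $\la^{S_n}_{i_n+1}\le a<b\le\la^{S_n}_{i_n}$.

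The key tool is a continuous deformation of $T_p$. For each $p$ I would construct a path $t\mapsto T_p(t)$, $t\in[0,1]$, of non-negative Hermitian matrices with $T_p(1)=T_p$ and $T_p(0)$ a reference matrix whose spike count above $\psi^{-1}(b)$ can be analyzed directly (for example, by contracting all the $\al_k$'s into $\Ga_H$). The path is to be chosen so that, at every $t$, the hypotheses (a)--(d) and (f) still hold with the same limit $H$ and the same gap $(c,d)\supset[a,b]$, and so that $\psi_{y,H}^{-1}([a,b])$ remains free of eigenvalues of $T_p(t)$. Setting $S_n(t)=\tfrac1n T_p(t)^{1/2}Z_n Z_n^* T_p(t)^{1/2}$, eigenvalues are continuous functions of $t$, and Proposition~\ref{prop1} applied to each fixed $T_p(t)$ shows that a.s.\ for large $n$ no eigenvalue of $S_n(t)$ can cross $[a,b]$ along the path. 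Consequently the number of sample eigenvalues in $(b,\infty)$ is invariant in $t$, and it remains to match the two counts at $t=0$.

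To carry out the base case, I would discretize the deformation and move one population eigenvalue at a time across $\psi^{-1}(b)$. Using the continuity of $F_{y_n,H_n}$ in $(y_n,H_n)$, the endpoints of the relevant gap of $F_{y_n,H_n}$ vary continuously with the deformation; using Lemma~\ref{lem:supp2} together with monotonicity of $\psi_{y,H}^{-1}$ on $\Ga^c_{F_{y,H}}$, each crossing of $\psi^{-1}(b)$ by a population eigenvalue produces exactly one extra sample eigenvalue above $b$. This one-to-one correspondence between crossings in the two spectra is what converts the trivial base-case count into $i_n$.

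The restriction that either $y[1-H(0)]\le 1$, or else $[a,b]\not\subset [0,x_0]$, is needed precisely at this counting step: it guarantees that the $\max(0,p-n)$ forced zero eigenvalues of $S_n$ appearing when the sample covariance matrix is singular lie strictly below the interval $[a,b]$, so that they do not spuriously contribute to $\#\{j\telque \la^{S_n}_j>b\}$. Without this assumption, $[a,b]$ would sit below $x_0$ and the identification with $i_n$ breaks. The main technical obstacle is the deformation itself: the almost-sure conclusion of Proposition~\ref{prop1} must be enforced \emph{simultaneously} along the entire path, not merely for each fixed $t$. The standard remedy, which I would follow, is to reduce to finitely many single-eigenvalue perturbation steps (say, at most $M$ of them), applying Proposition~\ref{prop1} once per stage and taking a union of the exceptional null sets; uniformity in $n$ of the required gap-control on $F_{y_n,H_n}$ along the path is the delicate point and uses the continuity of the map $(y,H)\mapsto F_{y,H}$ recalled in \S\ref{ssec:MP}.
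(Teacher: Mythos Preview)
The paper does not prove Proposition~\ref{prop2}. Look at the sentence introducing Propositions~\ref{prop1} and~\ref{prop2}: ``We need first quote two results of \citet{BS98,BS99b} on exact separation of sample eigenvalues.'' Both propositions are imported wholesale from Bai--Silverstein; the present paper only \emph{uses} them (in the proofs of Theorems~\ref{thm:main} and~\ref{thm:close-spikes}) and gives no argument of its own. So there is nothing to compare your attempt to within this paper.

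That said, your sketch is in the right spirit: the original Bai--Silverstein exact-separation proof is indeed a counting argument built on top of the ``no eigenvalues in $[a,b]$'' result, carried out via a continuous deformation together with eigenvalue interlacing. Two remarks on your specific plan. First, the deformation in \cite{BS99b} is not performed by sliding population eigenvalues of $T_p$ across $\psi^{-1}(b)$ one at a time; rather, one varies the aspect ratio (equivalently, grows the matrix one column/row at a time) and uses Cauchy interlacing to control how many sample eigenvalues can jump across the gap at each step. Your version---perturbing $T_p$---is conceptually plausible but runs into the difficulty that moving a population eigenvalue into or through $\psi^{-1}\{[a,b]\}$ may momentarily violate Condition~(f), closing the very gap you need Proposition~\ref{prop1} to protect. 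Second, you correctly flag the uncountable-union issue and propose a finite discretization; this is the right instinct, but the actual workhorse that makes the counting rigorous in \cite{BS99b} is rank-one interlacing (each step changes the eigenvalue counts by at most one), not merely continuity of $(y,H)\mapsto F_{y,H}$. If you want to reconstruct the proof, that is the lemma to reach for.
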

In other words, under these conditions,
it happens eventually  that
the numbers of sample eigenvalues  $\{ \lambda_{i}^{S_n}\}$ in both sides
of $[a,b]$ match exactly the numbers of populations eigenvalues $\{\al_k,\be_{n,j}\}$ in both sides
of the interval $\psi^{-1}\{[a,b]\}$.

\section{Almost sure convergence of sample eigenvalues from    generalized spikes}
\label{sec:asconv}

From \eq{eq:psi}, we have 
\[  
 \psi'(\al)= 1-y \int\!\frac{t^2}{(\al-t)^2} dH(t)~,\qquad
 \psi'''(\al)= -6y \int\!\frac{t^2}{(\al-t)^4} dH(t)~.
\]
Therefore, when $\al$ approaches the boundary of the support of $H$,
$\psi'(\al)$ tends to $-\infty$, see also Figure~\ref{fig:psi}.
Moreover,  $\psi'$ is concave on any interval outside $\Ga_H$. 

As we will see, the asymptotic behavior of the sample eigenvalues
generated by a generalized spike eigenvalue $\al$ depends on the sign
of $\psi'(\al)$.   

\begin{defi}
  We call a generalized spike eigenvalue $\al$,
  a {\em distant  spike} for the M.P. law $F_{y,H}$  if   $\psi'(\al)> 0$, and
  a {\em close spike}
  if $\psi'(\al)\le 0$.
\end{defi}

Recall that $\psi$ depend on the parameters $(y,H)$.
When $H$ is  fixed, and since $\psi$ tends to the identity function
as $y\to 0$,  a close spike for a given  M.P. law $F_{y,H}$ becomes a
distant  spike for M.P. law  $F_{y,H}$ for small enough $y$.

As an example, different types of spikes are displayed in
Figure~\ref{fig:psi3}. The solid curve corresponds to a zoomed view
of $\psi_{0.3,H}$ of Figure~\ref{fig:psi}. For $F_{0.3,H}$, the
three values $\al_1$, $\al_2$ and $\al_5$ are close spikes; each
small enough $\al$ (close to zero), or large enough $\al$ (not
displayed), or a value between $u$ and $v$ (see the figure) is a
distant spike. Furthermore, as $y$ decreases from $0.3$ to $0.02$
(dashed curve), $\al_1$, $\al_2$ and $\al_5$ become all distant
spikes.

Throughout this section, for each spike eigenvalue $\al_k$, we denote
by  $\nu_k+1,\ldots,\nu_k+n_k$ the descending ranks of $\al_k$ among
the eigenvalues of $T_p$ (multiplicities of eigenvalues are counted):
in other words, there are $\nu_{k}$ eigenvalues of $T_p$
larger than $\al_k$ and $p-\nu_{k}-n_k$ less.

\begin{thm}\label{thm:main}
  Assume that the conditions (a)-(e) hold. Let  $\al_k$ be a
  generalized spike eigenvalue  of multiplicity $n_k$
  satisfying  $ \psi'(\al_k)>0$ (distant spike) with descending ranks
  $\nu_k+1,\cdots, \nu_k+n_k$.  Then,
  the $n_k$ consecutive sample eigenvalues
  $\{\la^{S_n}_{i}\}$, $i=\nu_k+1,\ldots,\nu_k+n_k$
  converge almost surely to  $\psi(\al_k)$.
\end{thm}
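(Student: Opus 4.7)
The plan is to exploit the determinantal factorization \eq{eigenf} together with the exact-separation results of Propositions~\ref{prop1}--\ref{prop2}, and then reduce the theorem to the asymptotic analysis of the $M\times M$ secular equation $\det(\lambda I_M - K_n(\lambda)) = 0$ near the point $\lambda_0 := \psi(\alpha_k)$. Since $\alpha_k\notin\Ga_H$ and $\psi'(\alpha_k)>0$, Lemma~\ref{lem:supp2} guarantees that $\lambda_0\notin\Ga_{F_{y,H}}$, so an analysis on a small neighborhood of $\lambda_0$ outside the limiting bulk is meaningful.

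The first step is the \emph{localization} of the relevant sample eigenvalues: one needs to show that almost surely, for all large $n$, exactly $n_k$ sample eigenvalues lie in an arbitrarily small neighborhood of $\lambda_0$, and that these are precisely $\lambda^{S_n}_{\nu_k+1},\ldots,\lambda^{S_n}_{\nu_k+n_k}$. The delicate point is that although $\lambda_0\notin\Ga_{F_{y,H}}$, for each finite $n$ the support $\Ga_{F_{y_n,H_n}}$ contains a small ``spike island'' near $\lambda_0$ (of length $O(n_k/p)$), produced by the atom of mass $n_k/p$ that $\alpha_k$ contributes to $H_n$. I would therefore choose two auxiliary intervals $[a_n^-,b_n^-]$ and $[a_n^+,b_n^+]$ lying in the gaps of $\Ga^c_{F_{y_n,H_n}}$ on either side of this island, both shrinking to $\lambda_0$; Proposition~\ref{prop1} forbids any sample eigenvalue from entering these intervals, and Proposition~\ref{prop2} counts exactly $n_k$ sample eigenvalues strictly between them, of indices $\nu_k+1,\ldots,\nu_k+n_k$ because the population eigenvalues lying to the right of $\psi_{y_n,H_n}^{-1}([b_n^-,a_n^+])$ are exactly the $\nu_k$ eigenvalues of $T_p$ that exceed $\alpha_k$.

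The second step is the asymptotic analysis of $K_n(\lambda)$ on a small compact neighborhood $U$ of $\lambda_0$ with $U\cap\Ga_{F_{y,H}}=\emptyset$. On $U$ one has $\mathrm{spec}(S_{22})\cap U = \emptyset$ almost surely for large $n$ (no-eigenvalue-outside-the-support applied to the block $S_{22}$, using (e)), so the sample eigenvalues in $U$ coincide with the roots of $\det(\lambda I_M - K_n(\lambda))=0$. Applying the resolvent identity $X^*(\lambda I-XX^*)^{-1}X = X^*X(\lambda I-X^*X)^{-1}$ with $X=X_2$, one obtains
\[
K_n(\lambda)=\lambda\,\Sigma^{1/2}\,\frac{1}{n}W_1\bigl(\lambda I_n-\tfrac{1}{n}W_2^*V_p W_2\bigr)^{-1}W_1^*\,\Sigma^{1/2}.
\]
Since $W_1$ is independent of $W_2$ and has i.i.d.\ entries with $\E|w_{11}|^4<\infty$, the standard concentration for quadratic forms together with $\frac{1}{n}\tr(\lambda I_n-\tfrac{1}{n}W_2^*V_p W_2)^{-1}\to -m(\lambda)$ yields pointwise a.s.\ convergence $\frac{1}{n}W_1(\lambda I_n-\tfrac{1}{n}W_2^*V_p W_2)^{-1}W_1^*\to -m(\lambda) I_M$; analyticity in $\lambda$ and uniform boundedness of the resolvent on $U$ then allow Vitali's theorem to upgrade this to uniform convergence on $U$. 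Hence $K_n(\lambda)\to -\lambda\,m(\lambda)\,\Sigma$ almost surely, uniformly on $U$.

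Putting the two steps together, $\det(\lambda I_M - K_n(\lambda))$ converges, uniformly on $U$, to $\lambda^M\det(I_M+m(\lambda)\Sigma)$. Because $m(\lambda_0)=-1/\alpha_k$ and $m'(\lambda_0)\neq 0$ (as $g'(-1/\alpha_k)=\alpha_k^2\psi'(\alpha_k)>0$ under the distant-spike assumption), the factor $(1+m(\lambda)\alpha_k)^{n_k}$ vanishes at $\lambda_0$ with multiplicity exactly $n_k$, the other factors $(1+m(\lambda_0)\alpha_j)^{n_j}$ for $j\neq k$ being nonzero; choosing $U$ small enough to exclude the remaining values $\psi(\alpha_j)$, Hurwitz's theorem forces the $n_k$ sample eigenvalues identified in the localization step to converge almost surely to $\lambda_0$. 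The main obstacle in this plan is the localization step itself, which requires careful tracking of the moving support $\Ga_{F_{y_n,H_n}}$ so that Propositions~\ref{prop1}--\ref{prop2} can be invoked with the correct population count $\nu_k$; the subsequent secular-equation analysis is a standard resolvent computation.
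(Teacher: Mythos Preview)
Your proposal is correct, but it takes a longer route than the paper. The paper's proof uses \emph{only} the exact-separation results (Propositions~\ref{prop1}--\ref{prop2}) together with a direct sandwiching argument, and never invokes the factorization \eq{eigenf}, the random form $K_n(\lambda)$, or any secular-equation/Hurwitz analysis. Concretely: since $\psi'(\alpha_k)>0$, there is a maximal interval $(u_k,v_k)\ni\alpha_k$ on which $\psi'>0$. For any $\alpha_k<\alpha_u'<\alpha_u<v<v'<\min(v_k,\alpha_{k-1})$, condition~(e) keeps all eigenvalues of $T_p$ away from $[\alpha_u',v']$, so $\psi_n'\to\psi'>0$ uniformly there and $(\psi(\alpha_u'),\psi(v'))\subset\Ga^c_{F_{y_n,H_n}}$ for all large $n$. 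Proposition~\ref{prop2} with $i_n=\nu_k$ then gives $\limsup_n\lambda^{S_n}_{\nu_k+1}\le\psi(\alpha_u)$; letting $\alpha_u\downarrow\alpha_k$ yields the upper bound $\psi(\alpha_k)$. A symmetric argument on the left of $\alpha_k$ gives $\liminf_n\lambda^{S_n}_{\nu_k+n_k}\ge\psi(\alpha_k)$, and the sandwich finishes the proof.

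Your ``spike island'' observation is exactly the reason the paper works on intervals strictly to the right and left of $\alpha_k$ rather than across it; once that is done, the sandwich already gives convergence, so your Step~2 is redundant for this theorem. The resolvent computation $K_n(\lambda)\to[1+ym_1(\lambda)]\Sigma$ that you outline is precisely Lemma~\ref{lmm:K}, and the paper reserves that machinery for the CLT (Theorem~\ref{thm:CLT}), not for the almost-sure limit. The trade-off: the paper's argument is minimal and elementary for the a.s.\ result, while your approach, though heavier here, sets up the fluctuation analysis more transparently.
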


\begin{proof} Recall Figure~\ref{fig:psi3} of the $\psi$ function, for each
distant spike $\al_k$, there is an interval $(u_k,v_k)$ such that
\begin{itemize}
\item $u_k<\al_k<v_k$;
\item $\psi'(u_k)=\psi'(v_k)=0$;
\item $\psi'(\al)>0$ for all $\al\in(u_k,v_k)$.
\end{itemize}
Here we make the convention that $v_k=\infty$ if $\psi'(\al)>0$ for
all $\al>\al_k$ and $u_k=0$ if $\psi'(\al)>0$ for all $\al\in
(0,\al_k)$.

Recall that the support of $F_{y_n,H_n}$ is determined by
\begin{equation}
  \label{eq:psin}
  \psi_n'(\al)=\psi_{y_n,H_n}'(\al)  =
  1 -    y_n \bigg[\frac{p'}{p}\int\!\frac{t^2}{(\al-t)^2}
  dH_n^v(t)~  + \frac1p \sum_{j=1}^K\frac{n_j\al_j^2}{(\al-\al_j)^2}\bigg],
\end{equation}
where $H_n^v = \frac1{p'}\sum_{j} \de_{\be_{n,j}} $ is the ESD of $V_p$.

Let $\tilde v_k=\min(v_k,\al_{k-1})$ if $k>1$ and $\tilde v_k=v_k$
otherwise. Choose $v,v'$ and $\al'_u,\al_u$ such that
$\al_k<\al'_u<\al_u<v<v'<\tilde v_k$. By condition (e), all
eigenvalues of $T_p$ will keep away from the interval $(\al_u',v')$
for all large $n$. Thus, $\psi_n'(\al)\to\psi'(\al)>0$ uniformly on
the interval $[\al_u',v']$. Hence, the interval
$(\psi(\al_u'),\psi(v'))$ will be out of the support of
$F_{y_n,H_n}$ for all large $n$. Consequently, the interval
$[\psi(\al_u),\psi(v)]$ satisfies the conditions of Proposition
\ref{prop2} with $i_n=\nu_k$. Therefore, by Proposition \ref{prop2},
we have
$$
\begin{cases}
  P(\lambda_{\nu_k+1}^{S_n}\le \psi(\al_u)<\psi(v)\le
  \lambda_{\nu_k}^{S_n}, ~\mbox{ for all large $n$})=1&
  \mbox{ if }\nu_k>0;\cr
  P(\lambda_{\nu_k+1}^{S_n}\le \psi(\al_u),\
  \mbox{ for all large $n$})=1 & \mbox{ otherwise. }\cr
\end{cases}
$$
Therefore, it holds almost surely
\[ \limsup_n \lambda_{\nu_k+1}^{S_n}\le \psi(\al_u)  ,
\]
and finally, letting $\al_u \rw \al_k$,
\begin{equation}\label{upperB}
  \limsup_n \lambda_{\nu_k+1}^{S_n}\le \psi(\al_k).
\end{equation}

Similarly, one can prove that for any $\tilde u_k<u<\al_l<\al_k$,
$$
\begin{cases}
  P(\lambda_{\nu_k+n_k+1}^{S_n}\le \psi(u)  <\psi(\al_l)\le\lambda_{\nu_k+n_k}^{S_n},
  \ \mbox{ for all large  $n$})=1 &
  \mbox{ if } \nu_k+n_k<p,\cr
  P(\lambda_{\nu_k+n_k}^{S_n}\ge \psi(\al_l),\ \mbox{ for all large  $n$})=1&
  \mbox{ otherwise,}
\end{cases}
$$
where $\tilde u_k=\max(u_k,\al_{k+1})$ if $k<K$ and $\tilde u_k=u_k$
otherwise.

Consequently,
\begin{equation}\label{lowerB}
  \liminf_n \lambda_{\nu_k+n_k}^{S_n}\ge  \psi(\al_k).
\end{equation}
Thus, we proved that almost surely,
$$
\lim_n \lambda_{\nu_k+j}^{S_n}=\psi(\al_k), \mbox{ for } j=1,\cdots,n_k.
$$
The proof of Theorem \ref{thm:main} is complete.
\end{proof}

Next we consider close spikes.


\begin{thm}\label{thm:close-spikes}
  Assume that the conditions (a)-(e) hold. Let  $\al_k$ be a
  generalized spike eigenvalue of multiplicity $n_k$
  satisfying  $ \psi'(\al_k)\le 0$ (close spike) with descending ranks
  $\nu_k+1,\ldots,\nu_k+n_k$. Let $I$ be the
  maximal interval in $\Ga^c_H$ containing $\al_k$.

  \begin{enumerate}
  \item
    If $I$ has a sub-interval $(u_k,v_k)$ on which $\psi'>0$ (then we
    take this interval to be maximal), then
    the $n_k$ sample eigenvalues $\{\la^{S_n}_{j}\}$, $j=\nu_k+1,\ldots,\nu_k+n_k$
    converge almost surely to the number $\psi(w)$
    where $w$ is  one of the  endpoints  $\{u_k,v_k\}$ nearest to $\al_k$ ;
  \item
    If for all $\al\in I$, $\psi'(\al)\le 0$, then
    the $n_k$ sample eigenvalues $\{\la^{S_n}_{j}\}$, $j=\nu_k+1,\ldots,\nu_k+n_k$
    converge almost surely to the $\ga$-th quantile of $G$, the L.S.D. of
    $S_n$,   where $\ga=H(0, \al_k)$.
  \end{enumerate}
\end{thm}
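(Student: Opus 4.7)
The plan is to combine exact separation (Proposition~\ref{prop2}) applied to the gap of $F_{y,H}$ generated near a close spike with the almost sure convergence of the E.S.D.\ of $S_n$ to its limit $G$.

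First I would extract the structure of $\psi'$ on $I$. Since
\[
\psi'''(\al)= -6y\int \frac{t^2}{(\al-t)^4}\,dH(t)\le 0
\]
on $\Ga^c_H$, $\psi'$ is concave on $I$, and since $\psi'\to-\infty$ at any endpoint of $I$ belonging to $\Ga_H$, the super-level set $\{\al\in I:\psi'(\al)>0\}$ is either empty (case~2) or a unique open sub-interval $(u_k,v_k)\subset I$ at whose endpoints $\psi'$ vanishes (case~1). In case~1 the assumption $\psi'(\al_k)\le 0$ forces $\al_k\notin(u_k,v_k)$; I treat the configuration $\al_k\le u_k$ (so $w=u_k$), the other one being symmetric.

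For the upper bound in case~1, I would apply Proposition~\ref{prop2}. By Lemma~\ref{lem:supp2} the image $(\psi(u_k),\psi(v_k))$ is a gap of $\Ga_{F_{y,H}}$, and by continuity of $F_{y_n,H_n}$ in its indices it remains outside the support for all large $n$. Choosing $[a,b]=[\psi(u_k)+\ep,\psi(u_k)+2\ep]$, the right pre-image $\psi_{y_n,H_n}^{-1}(b)$ lies just above $u_k$; by condition~(e) the compact subinterval $(\al_k,u_k]\subset I$ contains no $V_p$-eigenvalue for large $n$, and by definition of $I$ no other spike, so the integer $i_n$ in \eq{interval2} equals $\nu_k$ eventually. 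Proposition~\ref{prop2} then gives $\la^{S_n}_{\nu_k+1}\le\psi(u_k)+\ep$ almost surely; letting $\ep\downarrow 0$ yields $\limsup_n\la^{S_n}_{\nu_k+j}\le\psi(u_k)$ for $j=1,\ldots,n_k$.

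For the lower bound in case~1 I would switch to the a.s.\ convergence of the global E.S.D.\ of $S_n$ to $G$. Combining the same exact-separation identification with this convergence gives $\nu_k/p\to 1-G(\psi(u_k))$. The crucial local fact is that $G$ is strictly increasing just below $\psi(u_k)$: this is the left edge of the gap, and since $\psi'(u_k)=0$ while concavity of $\psi'$ forces $\psi''(u_k)>0$, the density of $G$ vanishes like a square root but remains positive on a left neighborhood. Hence $G(\psi(u_k)-\ep)<G(\psi(u_k))$, and
\[
\#\bigl\{i:\la^{S_n}_i>\psi(u_k)-\ep\bigr\}=p\bigl(1-G(\psi(u_k)-\ep)\bigr)+o(p)>\nu_k+n_k
\]
for all large $n$, whence $\la^{S_n}_{\nu_k+n_k}>\psi(u_k)-\ep$ eventually and, on sending $\ep\downarrow 0$, $\liminf_n\la^{S_n}_{\nu_k+n_k}\ge\psi(u_k)$.

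Case~2 is handled purely by the E.S.D.\ argument: with no gap generated by $\al_k$, the rank fraction $(\nu_k+j)/p$ converges to $1-H([0,\al_k])$, and continuity plus monotonicity of $G$ at the corresponding quantile identify the limit as the $\gamma$-th quantile of $G$. The main obstacle I foresee is the lower-bound step of case~1: one has to justify that $G$ is genuinely strictly increasing just below $\psi(u_k)$ (ruling out a superposed gap contributed by some other component of $\Ga^c_H$) and to verify that $i_n=\nu_k$ holds eventually, which requires careful bookkeeping via condition~(e) of any further spike or $V_p$-eigenvalue that might temporarily lie in $I$ near $\al_k$ for finite $n$.
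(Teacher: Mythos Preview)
Your proposal is correct and follows essentially the same route as the paper: exact separation (Proposition~\ref{prop2}) on the gap $(\psi(u_k),\psi(v_k))$ for the upper bound in case~(i), the fact that $\psi(u_k)$ is a boundary point of $\Ga_G$ (so infinitely many sample eigenvalues accumulate just below it) for the lower bound, and the E.S.D.--quantile argument for case~(ii). One small inaccuracy: $I$ being a gap of $\Ga_H$ does \emph{not} by itself preclude other spikes from lying in $(\al_k,u_k]$, but this is harmless since it only forces $i_n\le\nu_k$, which still gives $\la^{S_n}_{\nu_k+1}\le\la^{S_n}_{i_n+1}\le\psi(u_k)+\ep$ --- precisely the bookkeeping you flag at the end, and which the paper handles via the $\tilde v_k=\min(v_k,\al_{k-1})$ device.
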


\begin{proof}
The proof refers to the curves of Figure~\ref{fig:psi3}.

\medskip
\noindent(i).\quad 
Suppose $\alpha_k$ is a spike eigenvalue
satisfying $\psi'(\alpha_k)\le 0$ and there is an interval
$(u_k,v_k)\subset I$ on which $ \psi'>0$ ($\al_k$ is like the $\al_1$ on
the figure). According to Lemma~\ref{lem:supp2}, 
$\psi\{(u_k,v_k)\}\subset\Ga^c_{F_{y,H}}$ and $\psi(u_k)$ is a boundary point of the
support of $G$, the L.S.D. of $S_n$. Without loss of generality, we
can assume $\al_k\le u_k$, the argument of the other situation where
$\al_k>v_k$ being similar.

Choose $u_k<\al_u<v<\tilde v$ ($\tilde v=\min(v_k,\al_{k-1})$ or $v_k$ in
accordance with $k>1$ or not) such that $(\al_u,v)\subset I$,
by the argument used in the proof of Theorem \ref{thm:main}, one can
prove that
$$
\begin{cases}
  P(\lambda_{\nu_k+1}^{S_n}\le \psi(\al_u)<\psi(v)\le
  \lambda_{\nu_k}^{S_n}, ~\mbox{ for all large $n$})=1&
  \mbox{ if }\nu_k>0;\cr
  P(\lambda_{\nu_k+1}^{S_n}\le \psi(\al_u),\
  \mbox{ for all large $n$})=1 & \mbox{ otherwise. }\cr
\end{cases}
$$
This proves that almost surely, 
$$
\limsup \lambda_{\nu_k+1}^{S_n}\le \psi(u_k) \le\liminf \lambda_{\nu_k}^{S_n}~. 
$$
On the other hand, 
since  $\psi(u_k)$ is a boundary point of the
support of $G$, we know that for any $\veps>0$, almost surely, 
the number of $\la_i^{S_n}$'s falling into  $[\psi(u_k)-\veps,\psi(u_k)]$
tends to  infinity. Therefore, 
$$
\liminf \lambda_{\nu_k+n_k+1}^{S_n}\ge \psi(u_k)-\veps,\quad \textrm{a.s.}.
$$
Since $\veps$ is arbitrary, we have finally proved that almost surely,  
$$
\lim \lambda_{\nu_k+j}^{S_n}= \psi(u_k),\quad  j=1,\cdots,n_k.
$$
Thus, the proof of Conclusion (i) of Theorem \ref{thm:close-spikes} is
complete.

Similarly, if the spiked eigenvalue $\alpha_k$ is like $\alpha_2$, we can show that the $n_k$
corresponding eigenvalues of $S_n$ goes to
$\psi(v_k)$.

\medskip
\noindent
(ii) \quad
If the spiked eigenvalues is like $\alpha_5$, where the gap of support of LSD
disappeared, clearly
the corresponding sample eigenvalues $\la_{\nu_k+1},\ldots,\la_{\nu_k+n_k}$  tend to the $\gamma$-th quantile
of the LSD of $S_n$ where
\[  \gamma = 1-\lim \frac{i_n}{\nu_k} = H(0,\al_k).
\]
\end{proof}

\subsection{Case of Johnstone's spiked population model}
\label{ssec:Johnstone}
In the case of Johnstone's model, $H$ reduces to the Dirac mass
$\de_{1}$ and the LSD $G$ equals the Mar\v{c}enko-Pastur law with
$\Ga_G=[a_y,b_y]$. Each $\al>0$, $\al\ne 1$ is then a spike
eigenvalue.
 The associated  function $\psi$  in \eq{eq:psi}  becomes
\begin{equation}\label{eq:psi-null}
  \psi(\al_k) = \al_k + \frac{y\al_k}{\al_k-1}.
\end{equation}
The function $\psi$ has the following properties, see
Figure~\ref{fig:psiJohnstone}:
\begin{itemize}
  \item its range equals $(-\infty,a_y]\cup[b_y,\infty)$ ;
  \item $\psi(1-\sqrt y)=a_y$ ,~ $\psi(1+\sqrt y)=b_y$;
  \item $\psi'(\al) > 0 \Leftrightarrow  |\al-1|> \sqrt y $.
\end{itemize}
Therefore, by Theorem~\ref{thm:main},  for any  spike eigenvalue
satisfying $\al_k>1+\sqrt y$ (large enough) or   $\al_k<1-\sqrt y$
(small enough), there is a
packet of $n_k$ consecutive eigenvalues $\{\la_{n,j}\}$ converging
almost surely to     $ \psi(\al_k) \notin [a_y,b_y] $.
In other words,  assume there are exactly $K_1$
spikes  greater than $1+\sqrt y$ and $K_2$ spikes smaller than
$1-\sqrt y$.
By Theorems~\ref{thm:main} and \ref{thm:close-spikes}
we conclude that 
\begin{enumerate}
\item  the $N_1:=n_1+\ldots+n_{K_1}$ largest eigenvalues
  $\{\la_j^{S_n}\}$, $j=1,\ldots,N_1$   tend to
  their respective limits $\{\psi(\al_k)\}$, $k=1,\ldots,K_1$ ;
\item
  the immediately following  largest eigenvalue $\la_{N_1+1}^{S_n}$
  tends to the right edge $b_y$;
\item  the $N_2:=n_K + \cdots+n_{K-K_2+1}$ smallest sample eigenvalues
  $\{\la_{n,p-j}^{S_n}\}$, $j=0,\ldots,N_2-1$   tend to
  their respective limits $\{\psi(\al_k)\}$, $k=K,\ldots,K-K_2+1$ ;
\item
  the immediately following smallest eigenvalue
  $\la_{p- N_2}^{S_n}$ tends to the left edge $a_y$.
\end{enumerate}

Hence we have recovered  the content of Theorem~1.1 of \cite{BaikSilv06}.

\subsection{An example of generalized spike eigenvalues}

Assume that $T_p$ is diagonal with three  base eigenvalues
 $\{1,4,10\}$,   nearly $p/3$ times for each of them, and
there are four  spike eigenvalues $(\al_1,\al_2,\al_3,\al_4)=(15,~6,~2,~0.5)$, 
 with respective multiplicities $(n_k)=(3,2,2,2)$.
The limiting population-sample ratio is taken to be  $y=0.3$.
The limiting population spectrum  $H$ is then the uniform distribution on $\{1,4,10\}$.
The support of the limiting   Mar\v{c}enko-Pastur distribution
$F_{0.3,H}$   contains two intervals [0.32, 1.37] and [1.67, 18], see  \S\ref{ssec:MP}.
The  $\psi$-function of \eq{eq:psi} for the current case is
displayed in  Figure~\ref{fig:psi}. 
For simulation, we use $p'=600$ so that $T_p$ has the following 609
eigenvalues:
\[ 15,~15,~15, 
  ~\underbrace{10,\ldots,10}_{200},
  ~6,~6,
  ~\underbrace{4,\ldots,4}_{200},
  ~2,~2,
  \underbrace{1,\ldots,1}_{200},
  ~0.5,~0.5~.
\]
From the table 
\begin{center}
  \begin{tabular}{r|llll}
    spike        $\al_k$   &  15    &  6     &  2   & 0.5  \\ \hline    
  multiplicity   $n_k$     &   3    &  2     &  2   &  2  \\
   $\psi'(\al_k)$          &  $ +$    &  $-$     &  $+$   &  $-$  \\
   $\psi(\al_k)$           &  18.65    &  5.82    &  1.55  &  0.29 \\ 
  descending ranks         &  1, 2, 3    & 204, 205 & 406, 407 & 608, 609   \\ \hline
\end{tabular}
\end{center}
\medskip
we see that 6 is a close spike for  $H$  while  the three others are distant
ones. By Theorems~\ref{thm:main} and \ref{thm:close-spikes}, we know
that 
\begin{itemize}
\item the 7 sample eigenvalues $\la_j^{S_n}$ with $j\in\{1,~2,~3,~406,
  ~407,~608, ~609\}$ associated to distant spikes tend to  18.65,
  1.55  and   0.29, respectively, which are located outside the
  support of limiting distribution $F_{0.3,H}$ (or $G$); 
\item
  the two sample eigenvalues $\la_j^{S_n}$ with  $j=204,205$ 
  associated to the close spike $6$  tend to a limit located inside
  the support, the $\ga$-th quantile of
  the limiting distribution $G$  where $\ga=H(0,6)=2/3$.
\end{itemize}
There facts are illustrated  by a simulation 
 sample displayed in Figure~\ref{fig:example}.

\section{CLT for  sample eigenvalues from distant generalized spikes}
\label{sec:CLT}

Following  Theorem~\ref{thm:main}, to any distant generalized spike eigenvalue
$\al_k$, there is  a packet of
$n_k$ consecutive sample eigenvalues
 $\{\la_j^{S_n} :~  j \in J_k   \} $
converging  to $\psi(\al_k)\notin\Ga_G$ where $J_k$ are the descending
ranks of $\al_k$ among the eigenvalues of $T_p$ (counting
multiplicities).
The aim of this section  is to derive a CLT for  $n_k$-dimensional
vector
\[     \sqrt n \{ \la_j^{S_n} -\psi(\al_k)  \} ~,\quad  j\in J_k.
\]
The method follows \citet{BaiYaoIHP} which considers  Johnstone's spiked
population model.
Consider the random form $K_n$
introduced in \eq{Klambda} and  let
\begin{equation}
  A_n=(a_{ij}) =A_n(\la)= X_2^*(\lambda I-X_2X_2^*)^{-1}X_2, \quad
  \la\notin \Ga_G .
\label{eq:An}
\end{equation}
By Lemma~\ref{lem2},  detailed in \S \ref{sec:proofs},
we know that
$  n^{-1} trA_n$, $n^{-1}trA_nA_n^* $ and
$ n^{-1}\sum_{i=1}^na_{ii}^2 $ converge,  almost surely or in
probability,  to
$ y m_1(\la)$,   $ y m_2(\la)$ and
$    \left( { y[1+m_1(\la)] } / \{\lambda - y [1+m_1(\la)] \}
\right)^2$, respectively. Here, the $m_j(\la)$ are
some specific transforms of the LSD $G$ (see   \S\ref{sec:proofs}).

Therefore, the random form $K_n$ in
\eq{Klambda} can be decomposed as follows
\begin{eqnarray*}
  K_n(\lambda) &=& S_{11} + X_1A_n X_1^*  =  \frac1n  \xi_{1:n}(I+A_n)\xi_{1:n}^*\\
  &=&  \frac1n  \left\{ \xi_{1:n}(I+A_n)\xi_{1:n}^* -  \Sigma \mathrm{tr}(I+A_n)
  \right\}  + \frac1n  \Sigma   \mathrm{tr}(I+A_n)   \\
  & = &  \frac1{\sqrt n}  R_n + \left[ 1+y m_1(\lambda)\right] \Sigma
  +o_P( \frac1{\sqrt n} ),
\end{eqnarray*}
with
\begin{equation}
 R_n = R_n(\la) = \frac1{\sqrt n}  \left\{ \xi_{1:n}(I+A_n)\xi_{1:n}^* -
  \Sigma \mathrm{tr}(I+A_n)
  \right\}.       \label{Rn}
\end{equation}
In the last derivation, we have used the fact
\[ \frac1n tr (I+A_n) = 1 + ym_1(\la)+ o_P( \frac1{\sqrt n} ),
\]
which follows from a  CLT for $\mathrm{tr}(A_n)$
\citep[see][]{BS04}.

For the statement of our result, we first need
 to find the limit distribution of the sequence of
random matrices $\{R_n(\la)\}$.
The situation is different for  the
real and complex cases.  By applications of Propositions 3.1 and 3.2
in \cite{BaiYaoIHP}, we have for $\la\notin\Ga_G$, 
\begin{enumerate}
\item if the variables $(w_{ij})$ are real-valued,  
  the random matrix $R_n(\la)$ converges weakly to a
  symmetric random matrix $R(\la)=(R_{ij}(\la))$   with  zero-mean Gaussian
  entries  having  an explicitly known  covariance function ; 
\item 
  if the variables $(w_{ij})$ are complex-valued,  
  the random matrix  $R_n$ converges weakly to a
  zero-mean Hermitian random matrix  $R(\la)=(R_{ij}(\la))$. Moreover,
  the real and imaginary  parts of its upper-triangular bloc
  $\{ R_{ij}(\la),~1\le i\le j\le M\} $ form a $2K$-dimensional Gaussian
  vector with an explicitly  known covariance matrix.
\end{enumerate}

We are in order to introduce our CLT.
Let the spectral decomposition of $\Si$,
\begin{equation}\label{decompSigma}
    \Si = U
    \begin{pmatrix}
      \al_1 I_{n_1} & \cdots  &   0\\
      0             & \ddots  &   0 \\
      \cdots        &  0      & \al_K I_{n_K}
    \end{pmatrix}
    U^* ~,
\end{equation}
where $U$ is an unitary matrix.
Let  $\psi_k=\psi(\al_k)$ and   $R(\psi_k)$ be the
weak Gaussian  limit of the sequence of matrices of random forms
$[R_n(\psi_k)]_n$  recalled above (in both real and complex variables
case). Let
\begin{equation} \label{bloc-R-limit}
  \widetilde R (\psi_k) =  U^* R(\psi_k)U~.
\end{equation}

\begin{thm}\label{thm:CLT}
  For each distant generalize  spike  eigenvalue,
  the $n_k$-dimensional real vector
  \[  \sqrt n \{  \la_j^{S_n}-\psi_k ,~    j\in J_k\} ~,
  \]
  converges weakly to the distribution of the $n_k$  eigenvalues of the
  Gaussian random matrix
  \[     \frac1{1+ym_3(\psi_k)\al_k} \widetilde R_{kk}  (\psi_k).
  \]
  where
  $\widetilde R_{kk}  (\psi_k)$  is the  $k$-th diagonal block of
  $\widetilde R (\psi_k)$
  corresponding to the indexes $ \{  u,v  \in J_k\}$.
\end{thm}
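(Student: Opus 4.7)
The plan is to localize the eigenvalue problem to an $n^{-1/2}$-neighborhood of $\psi_k$ via the factorization \eqref{eigenf}. Because $\psi_k\notin\Gamma_G$ and the $n_k$ sample eigenvalues $\{\la_j^{S_n}\colon j\in J_k\}$ converge almost surely to $\psi_k$ by Theorem~\ref{thm:main}, Proposition~\ref{prop1} ensures that for all large $n$ these eigenvalues lie outside the spectrum of $S_{22}$, and are therefore exactly the roots (with multiplicity) of the $M\times M$ determinantal equation $\det[\la I_M - K_n(\la)] = 0$.

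Set $\la = \psi_k + x/\sqrt n$ with $x=O(1)$, and invoke the decomposition
\[
K_n(\la) = [1+ym_1(\la)]\,\Si + \tfrac{1}{\sqrt n}\, R_n(\la) + o_P(n^{-1/2})
\]
derived just above the statement. The analyticity of $m_1$ on $\Gamma_G^c$ allows a Taylor expansion: on the eigenspace of $\al_k$ one has the consistency identity $\psi_k = [1+ym_1(\psi_k)]\al_k$ (which is exactly the fixed-point condition making $\psi_k$ an eigenvalue of the leading deterministic part of $K_n$), so
\[
\la - [1+ym_1(\la)]\,\al_k = \frac{x}{\sqrt n}\bigl[1+y\al_k m_3(\psi_k)\bigr] + o(n^{-1/2}),
\]
where $1+y\al_k m_3(\psi_k)=1-y\al_k m_1'(\psi_k)$ in the convention of Section~6. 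For $j\ne k$, the scalar $\psi_k - [1+ym_1(\psi_k)]\al_j$ stays bounded away from zero.

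Rotating into the eigenbasis of $\Si$ given by the unitary $U$ of \eqref{decompSigma}, the determinantal equation becomes
\[
\det\Bigl[\,\la I - [1+ym_1(\la)]\,\tilde\Si - \tfrac{1}{\sqrt n}\,\tilde R_n(\la) + o_P(n^{-1/2})\Bigr] = 0,
\]
with $\tilde R_n=U^*R_nU$ and $\tilde\Si=U^*\Si U$ block-diagonal. A Schur-complement expansion, based on the fact that only the $k$-th diagonal $n_k\times n_k$ block of $\la I - [1+ym_1(\la)]\tilde\Si$ is of order $n^{-1/2}$ while the other diagonal blocks are bounded and invertible, reduces the equation (after factoring out the non-$k$ blocks and multiplying by $n^{n_k/2}$) to
\[
\det\Bigl[x\bigl(1+y\al_k m_3(\psi_k)\bigr)\,I_{n_k} - \tilde R_{n,kk}(\psi_k) + o_P(1)\Bigr] = 0.
\]
The weak convergence $R_n(\psi_k) \Rw R(\psi_k)$ borrowed from Propositions~3.1--3.2 of \cite{BaiYaoIHP}, combined with the continuous mapping theorem applied to roots of a determinantal polynomial, then gives the joint weak convergence of the $n_k$ solutions $x_j=\sqrt n(\la_j^{S_n}-\psi_k)$ to the $n_k$ eigenvalues of $[1+y\al_k m_3(\psi_k)]^{-1}\tilde R_{kk}(\psi_k)$.

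The main obstacle is to make the localization and the Schur-complement expansion rigorous uniformly for $\la$ in an $O(n^{-1/2})$ neighborhood of $\psi_k$: one must control the $o_P(n^{-1/2})$ remainder in the expansion of $K_n(\la)$ and the stochastic equicontinuity of $\la\mapsto R_n(\la)$ there, and one must rigorously match the roots of the resulting $n_k\times n_k$ determinantal equation with genuine sample eigenvalues (without double-counting or losing any). Both are handled by combining the analyticity of $m_1$ on $\Gamma_G^c$ with the tightness and uniform-in-$\la$ control of $R_n$ already developed in \cite{BaiYaoIHP}.
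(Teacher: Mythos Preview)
Your proposal is correct and follows essentially the same route the paper indicates: the paper does not give a self-contained proof of Theorem~\ref{thm:CLT} but refers to the methodology of \cite{BaiYaoIHP}, having set up the key decomposition $K_n(\la)=[1+ym_1(\la)]\Si+\tfrac1{\sqrt n}R_n(\la)+o_P(n^{-1/2})$ and the Gaussian limit of $R_n(\psi_k)$ just before the statement. Your localization via the factorization \eqref{eigenf}, the fixed-point identity $\psi_k=[1+ym_1(\psi_k)]\al_k$, the Taylor expansion producing the scaling factor $1+y\al_k m_3(\psi_k)$, the rotation by $U$ and Schur-complement reduction to the $k$-th $n_k\times n_k$ block, and the final appeal to the continuous mapping theorem for roots, are exactly the steps carried out in \cite{BaiYaoIHP} for Johnstone's model and transposed here.
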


It is worth noticing   that the limiting distribution
of such $n_k$ packed  sample extreme  eigenvalues  are generally
{\em  non Gaussian} and asymptotically dependent.
Indeed,  the limiting
distribution of  a single sample extreme eigenvalue
$\la_j^{S_n}$ is  Gaussian if and only if the corresponding generalized
spike eigenvalue is simple.  We refer the reader to \cite{BaiYaoIHP}
for detailed examples  illustrating these same facts but for Johnstone's
model.

\section{Lemmas}
\label{sec:proofs}

For $\la \notin \Ga_G$, we
define
\begin{eqnarray*}
  m_1(\lambda) & =&   \int \frac{x }{\lambda-x}  dG(x) ,  \label{m1} \\
  m_2(\lambda) & =&   \int  \frac{x^2 }{(\lambda-x)^2}   dG(x)  \label{m2} ~,\\
  m_3(\lambda) & =&   \int  \frac{x }  {(\lambda-x)^2}   dG(x)  \label{m3} ~.
\end{eqnarray*}
The following lemma gives the law of large numbers for some useful
statistics of   $A_n$ defined in \eq{eq:An}.
 We omit its proof because it is a
straightforward extension of Lemma 6.1 of \cite{BaiYaoIHP},
related to Johnstone's spiked population model, to the present
generalized spiked population model.

\begin{lemma}\label{lem2}
  Under the assumptions of Theorem \ref{thm:main}, for all $\lambda\in[a,b]$, we have
  \begin{eqnarray}
    \frac{1}{n}trA_n & \stackrel{a.s.}\longrightarrow &  y m_1(\la)~,\label{a14bis}\\
    \frac{1}{n}trA_nA_n^* & \stackrel{a.s.}\longrightarrow &  y m_2(\la)~,\label{a14}\\
    \frac{1}{n}\sum\limits_{i=1}^na_{ii}^2  & \stackrel{a.s.}\longrightarrow&
    \left( \frac{ y[1+m_1(\la)] }  {\lambda - y [1+m_1(\la)] } \right)^2.
    \label{a15}
  \end{eqnarray}
\end{lemma}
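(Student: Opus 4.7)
The plan rests on the algebraic identity
\[
A_n = -I_n + \la (\la I_n - C_n)^{-1}, \qquad C_n := X_2^* X_2,
\]
obtained from the rectangular resolvent identity $X_2^*(\la I - X_2X_2^*)^{-1} = (\la I - X_2^*X_2)^{-1}X_2^*$. Since the spike block $\Si$ is only a finite-rank perturbation of $T_p$, the ESD of $B_n := X_2X_2^*$ still converges almost surely to $G = F_{y,H}$. Moreover $\la \in [a,b] \subset \Ga_G^c$, so by Proposition~\ref{prop1} the spectrum of $B_n$ stays bounded away from $\la$ eventually, justifying dominated convergence of integrals against the ESD of $B_n$.

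For \eqref{a14bis} and \eqref{a14}, the traces collapse to resolvent moments of $B_n$. Using the $n-p'$ zero eigenvalues of $C_n$,
\[
\frac{1}{n}\tr A_n = -1 + \la \cdot \frac{1}{n}\tr(\la I - C_n)^{-1} = -y_n + y_n \la \cdot \frac{1}{p'}\tr(\la I - B_n)^{-1},
\]
which converges a.s.\ to $y\la s(\la) - y = y m_1(\la)$, where $s$ is the Stieltjes transform of $G$. Likewise $\tr(A_n A_n^*) = \tr[C_n^2(\la I - C_n)^{-2}]$ reduces to a sum over the non-zero eigenvalues of $C_n$, i.e., the eigenvalues of $B_n$, yielding $\tfrac{1}{n}\tr(A_n A_n^*) = y_n \int x^2(\la-x)^{-2}\,dF^{B_n}(x) \to y m_2(\la)$.

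The third identity \eqref{a15} is the delicate one. A Schur-complement expansion of the $i$-th diagonal entry of $D := (\la I - C_n)^{-1}$, together with the identity
\[
X_2^{(i)}(\la I_{n-1} - C_n^{(i)})^{-1} (X_2^{(i)})^* = -I_{p'} + \la M_i, \quad M_i := (\la I - X_2^{(i)} (X_2^{(i)})^*)^{-1},
\]
collapses to $D_{ii} = 1/[\la(1 - q_i)]$ with $q_i := \tfrac{1}{n}\bdeta_i^* M_i \bdeta_i$, so that $a_{ii} = -1 + \la D_{ii} = q_i/(1 - q_i)$. Because $\bdeta_i$ is independent of $M_i$, the Bai-Silverstein quadratic-form concentration gives $q_i - \tfrac{1}{n}\tr(V_p M_i) \to 0$ a.s., and a rank-one perturbation bound replaces $M_i$ by $M := (\la I - B_n)^{-1}$. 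Hence $q_i \to \bar q := \lim \tfrac{1}{n}\tr(V_p M)$ for a common deterministic limit, and $a_{ii} \to \bar a := \bar q/(1 - \bar q)$. Once this convergence is upgraded to uniformity in $i$ (see below), the combination with \eqref{a14bis}, namely $\tfrac{1}{n}\sum_i a_{ii} \to y m_1$, identifies $\bar a = y m_1(\la)$, and then $\tfrac{1}{n}\sum_i a_{ii}^2 \to \bar a^2 = (y m_1(\la))^2$. Finally, the Marchenko-Pastur self-consistency relation $(1 + m_1)(1 + y m_1) = \la m_1$ for $G$ rearranges this limit into $\bigl(y(1+m_1)/[\la - y(1+m_1)]\bigr)^2$, matching the stated form.

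The main obstacle is the uniform-in-$i$ upgrade $\max_i |q_i - \bar q| \to 0$ a.s., which is needed to pass from the pointwise limit $a_{ii} \to \bar a$ to the Cesaro averages $\tfrac{1}{n}\sum a_{ii}$ and $\tfrac{1}{n}\sum a_{ii}^2$. This is delivered by the Bai-Silverstein maximal inequality for quadratic forms: the fourth-moment hypothesis on $w_{11}$ yields $E|q_i - \tfrac{1}{n}\tr(V_p M_i)|^{2k} = O(n^{-k})$ for sufficiently large $k$, so a union bound over $i \le n$ combined with Borel-Cantelli closes the argument.
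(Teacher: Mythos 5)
Your handling of \eqref{a14bis} and \eqref{a14} is correct and follows essentially the standard route (note the paper gives no proof of this lemma, deferring to Lemma 6.1 of \cite{BaiYaoIHP}): the push-through identity reduces both traces to spectral integrals of $S_{22}=X_2X_2^*$, whose ESD still converges to $G$ after the finite-rank perturbation, and Proposition \ref{prop1} keeps its spectrum away from a neighbourhood of $[a,b]$ so the integrands stay bounded. The Schur-complement representation $a_{ii}=q_i/(1-q_i)$ and quadratic-form concentration for $q_i$ are also the right tools for \eqref{a15}; one minor gloss is that with only $E|w_{11}|^4<\infty$ you cannot bound $2k$-th moments of $q_i-\frac1n\tr(V_pM_i)$ for large $k$ directly (these moments need not even be finite), so the standard truncation of the $w_{ij}$ at $\eta_n\sqrt n$ must be inserted before the union bound and Borel--Cantelli.

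The genuine gap is your final step. Granted the uniformity, your argument correctly yields $\frac1n\sum_i a_{ii}^2\to\bigl(ym_1(\la)\bigr)^2$: indeed $q_i$ concentrates around $\frac1n\tr\bigl(V_p(\la I-S_{22})^{-1}\bigr)$, whose limit (using $g(m(\la))=\la$, with $m=g^{-1}$ the Stieltjes transform of $F_{y,H}$ from \S\ref{ssec:MP}) is $\bar q=\frac{y}{\la}\int\frac{t\,dH(t)}{1+t\,m(\la)}=\frac{ym_1(\la)}{1+ym_1(\la)}$, so that $\bar q/(1-\bar q)=ym_1(\la)$, consistent with your Ces\`aro identification via \eqref{a14bis}. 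But the relation you then invoke to convert $(ym_1)^2$ into the stated right-hand side, namely $(1+m_1)(1+ym_1)=\la m_1$, is not a property of the generalized Mar\v{c}enko--Pastur law: writing $1+m_1(\la)=\int\frac{dH(t)}{1+t\,m(\la)}$ and $1+ym_1(\la)=-\la m(\la)$, it is equivalent to
\[
\int\frac{(1-t)\,dH(t)}{1+t\,m(\la)}=0,
\]
which holds when $H=\de_1$ (Johnstone's case, where that self-consistency relation and the displayed right-hand side of \eqref{a15} originate) but fails for general $H$. Consequently, for general $H$ the quantities $(ym_1(\la))^2$ and $\bigl(y[1+m_1(\la)]/\{\la-y[1+m_1(\la)]\}\bigr)^2$ differ, the ``rearrangement'' is unavailable, and the proposal as written does not establish \eqref{a15} in the stated form --- it establishes the limit $(ym_1(\la))^2$, i.e.\ $\bigl(\lim\frac1n\tr A_n\bigr)^2$. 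This is precisely where the generalized model departs from the null case: $\frac1n\tr(V_pM_i)$ is no longer $\frac1n\tr M_i$, so the null-case value $y[1+m_1(\la)]/\la$ for $\bar q$ (and hence the right-hand side of \eqref{a15}, which is taken verbatim from the null-case lemma) cannot be carried over without the extra identity above; you need either to identify the limit as $(ym_1(\la))^2$ and address the mismatch with the statement, or to supply a proof of that identity, which does not exist for general $H$.
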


\begin{lemma}\label{lmm:K}
  For all $\la\in [a,b]$, $K_n(\la)$ converges
  almost surely  to
  the constant matrix $[1+ym_1(\la)]\Si$.
\end{lemma}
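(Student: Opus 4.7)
The plan is to start from the identity
\[
K_n(\la) = \frac{1}{n}\xi_{1:n}(I+A_n)\xi_{1:n}^* = S_{11} + \frac{1}{n}\xi_{1:n}A_n\xi_{1:n}^*,
\]
already established in Section \ref{sec:model}, and to treat the two summands separately. For the first one, observe that $S_{11}=\frac{1}{n}\sum_{j=1}^n \xi_j \xi_j^*$ is the sample covariance of the i.i.d.\ $M$-dimensional vectors $\xi_j=\Si^{1/2}(w_{ij})_{1\le i\le M}$ with mean $0$ and covariance $\Si$. Since $M$ is fixed, the classical strong law of large numbers gives $S_{11}\to \Si$ almost surely.

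For the second term, write $\xi_{1:n}=\Si^{1/2}W_1$ with $W_1=(w_{ij})_{1\le i\le M,\,1\le j\le n}$, so that
\[
\frac{1}{n}\xi_{1:n}A_n\xi_{1:n}^* = \Si^{1/2}\Bigl(\frac{1}{n}W_1 A_n W_1^*\Bigr)\Si^{1/2}.
\]
The crucial structural remark is that $A_n=A_n(\la)$ is a function of $X_2$ alone, hence independent of $W_1$. Working conditionally on $X_2$, the idea is to show that $\frac{1}{n}W_1 A_n W_1^*$ concentrates around $\frac{1}{n}\tr(A_n)\,I_M$, and then invoke Lemma~\ref{lem2} for the limit of the trace. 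More precisely, I would split each entry of $W_1 A_n W_1^*$ as
\[
[W_1 A_n W_1^*]_{ij} = \sum_{k} a_{kk}w_{ik}\overline{w_{jk}} + \sum_{k\ne l} a_{kl}w_{ik}\overline{w_{jl}},
\]
with $A_n=(a_{kl})$. For $i\neq j$ both sums are centered quadratic forms in independent variables, whose conditional variances (given $X_2$) are controlled by $\frac{1}{n^2}\tr(A_nA_n^*)$. For $i=j$, the diagonal part $\frac{1}{n}\sum_k a_{kk}(|w_{ik}|^2-1)$ has conditional variance controlled by $\frac{1}{n^2}\sum_k a_{kk}^2$, while its mean contribution is $\frac{1}{n}\tr(A_n)$.

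Applying Lemma~\ref{lem2}, both $\frac{1}{n}\tr(A_nA_n^*)$ and $\frac{1}{n}\sum_k a_{kk}^2$ are almost surely bounded as $n\to\infty$, so the conditional variances of all off-centered pieces are $O(1/n)$. Combining this with a standard bound on the fourth (or higher) moments of quadratic forms à la Bai--Silverstein (e.g.\ Lemma~2.7 of \cite{BSbook}), which is available since $\E|w_{11}|^4<\infty$, one obtains
\[
\E\Bigl|[W_1A_nW_1^*]_{ij} - \delta_{ij}\tr(A_n)\Bigr|^{2r} \le C\, n^{r}\,\bigl(\tfrac{1}{n}\tr(A_nA_n^*)\bigr)^{r}
\]
for a suitable $r\ge 2$, allowing a Borel--Cantelli argument (and using the independence of $W_1$ from $A_n$, so the conditional bounds lift to unconditional ones) to upgrade the convergence to almost sure. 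Together with $\frac{1}{n}\tr(A_n)\to y m_1(\la)$ a.s.\ from Lemma~\ref{lem2}, this yields $\frac{1}{n}W_1A_nW_1^* \to y m_1(\la)\,I_M$ a.s., whence $\frac{1}{n}\xi_{1:n}A_n\xi_{1:n}^* \to y m_1(\la)\,\Si$ a.s. Adding the two limits gives $K_n(\la)\to [1+y m_1(\la)]\Si$ as claimed.

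The main technical obstacle is the almost sure (rather than merely in probability) convergence of the quadratic forms built from $W_1$ and $A_n$, which requires the higher moment bound above and a careful use of the independence between $W_1$ and $A_n$; once that is in hand, the rest is bookkeeping.
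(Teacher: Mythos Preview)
Your overall strategy---decompose $K_n(\la)$, use the independence of $W_1$ and $A_n$, and appeal to Lemma~\ref{lem2} for the trace---is exactly the route the paper takes. The paper does not split off $S_{11}$ separately but treats $\frac1n\xi_{1:n}(I+A_n)\xi_{1:n}^*$ as a whole; that difference is cosmetic.

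There is, however, a genuine gap in your Borel--Cantelli step. The inequality you invoke,
\[
\E\Bigl|[W_1A_nW_1^*]_{ij}-\delta_{ij}\tr A_n\Bigr|^{2r}\le C\,n^{r}\Bigl(\tfrac1n\tr A_nA_n^*\Bigr)^{r},
\]
is a \emph{conditional} bound given $A_n$, and for $r\ge 2$ the Bai--Silverstein lemma you cite actually requires $\E|w_{11}|^{4r}<\infty$ (and an extra term $\tr(A_nA_n^*)^{r}$), neither of which is available here. Even granting the bound, lifting it to an unconditional Borel--Cantelli estimate would require control of $\E\bigl[(\tfrac1n\tr A_nA_n^*)^{r}\bigr]$. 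Lemma~\ref{lem2} gives only the almost sure limit of $\tfrac1n\tr A_nA_n^*$, not a moment bound; since $A_n(\la)$ contains $(\la I-S_{22})^{-1}$, on the event that some eigenvalue of $S_{22}$ falls near $\la$ these quantities blow up, so the required expectations need not be finite.

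The paper's fix is precisely the missing ingredient: it introduces the event $M=\{S_{22}$ has no eigenvalue in $[a',b']\}$ for a slightly larger interval $[a',b']\supset[a,b]$ still inside the spectral gap. On $M$ one has the deterministic bound $\|A_n\|\le\max\{(a-a')^{-1},(b'-b)^{-1}\}$, and by Proposition~\ref{prop1} the indicator $I_M\to 1$ almost surely. Working with $A_nI_M$ instead of $A_n$ makes your conditional variance bounds uniform in $\omega$, and then a truncation of the $w_{ij}$ (standard under the fourth-moment assumption) restores the higher moments needed for Borel--Cantelli. Once you insert this localisation to $M$, your argument goes through and coincides with the paper's.
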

\begin{proof}
The random form $K_n$ in
\eq{Klambda} can be decomposed as follows
\[
  K_n(\lambda) = S_{11} + X_1A_n X_1^*  =
  \frac1n  (\xi_1,\ldots,\xi_n ) (I+A_n)(\xi_1,\ldots,\xi_n )^* .
\]
Define $M$ be the event that $S_{22}$ has no eigenvalues in the
interval $[a',b']$ which satisfies $[a,b]\subset (a',b')$ and
$[a',b']\subset (c,d)$. On the event $M$, the norm of $A_n$ is
bounded by $\max\{\frac1{a-a'},\frac1{b'-b}\}$. By independence, it
is easy to show that
$$
\frac1n  \left\{(u_1,\ldots,u_n ) (I+A_n)(u_1,\ldots,u_n )^*I_M-
  [\mathrm{tr}(I+A_n)]I_M \right\}\stackrel{a.s.}\to 0.
  $$
By proposition \ref{prop1}, $I_m\to 1,a.s.$. Thus
\begin{eqnarray}
\label{Dn}
  D_n(\la) & = & o_{a.s.}(1)+
  +  [\frac1n \mathrm{tr}(I+A_n)] \Sigma I_M\stackrel{a.s.}\to
  (1+ym_1(\la))\Sigma,
\end{eqnarray}
where the last step follows from \eq{a14bis}. The conclusion
follows.
\end{proof}

\bibliographystyle{plainnat}
\bibliography{matrices}

\begin{thebibliography}{14}
\providecommand{\natexlab}[1]{#1}
\providecommand{\url}[1]{\texttt{#1}}
\expandafter\ifx\csname urlstyle\endcsname\relax
  \providecommand{\doi}[1]{doi: #1}\else
  \providecommand{\doi}{doi: \begingroup \urlstyle{rm}\Url}\fi

\bibitem[Bai and Silverstein(2004)]{BS04}
Z.D. Bai and J.W. Silverstein.
\newblock {CLT} for linear spectral statistics of large-dimensional sample
  covariance matrices.
\newblock \emph{Ann. Probab.}, 32:\penalty0 553--605, 2004.

\bibitem[Bai and Silverstein(1998)]{BS98}
Z.D. Bai and J.W. Silverstein.
\newblock No eigenvalues outside the support of the limiting spectral
  distribution of large dimensional sample covariance matrices.
\newblock \emph{Ann. Probab.}, 26:\penalty0 316--345, 1998.

\bibitem[Bai and Silverstein(1999)]{BS99b}
Z.D. Bai and J.W. Silverstein.
\newblock Exact separation of eigenvalues of large dimensional sample
  covariance matrices.
\newblock \emph{Ann. Probab.}, 27(3):\penalty0 1536--1555, 1999.

\bibitem[Bai and Silverstein(2006)]{BSbook}
Z.D. Bai and J.W. Silverstein.
\newblock \emph{Spectral Analysis of Large Dimensional Random Matrices}.
\newblock Science Press, Beijing, 2006.

\bibitem[Bai and Yao(2008)]{BaiYaoIHP}
Z.D. Bai and J.-F. Yao.
\newblock Central limit theorems for eigenvalues in a spiked population model.
\newblock \emph{Ann. Inst. Henri Poincaré}, 44\penalty0 (3):\penalty0 447--474,
  2008.

\bibitem[Baik and Silverstein(2006)]{BaikSilv06}
J.~Baik and J.W. Silverstein.
\newblock Eigenvalues of large sample covariance matrices of spiked population
  models.
\newblock \emph{J. Multivariate. Anal.}, 97:\penalty0 1382--1408, 2006.

\bibitem[Baik et~al.(2005)Baik, {Ben~Arous}, and Péché]{BBP05}
J.~Baik, G.~{Ben~Arous}, and S.~Péché.
\newblock Phase transition of the largest eigenvalue for nonnull complex sample
  covariance matrices.
\newblock \emph{Ann. Probab.}, 33\penalty0 (5):\penalty0 1643--1697, 2005.

\bibitem[Capitaine et~al.(2007)Capitaine, Donati-Martin, and
  Féral]{CapDonFer07}
M.~Capitaine, C.~Donati-Martin, and D.~Féral.
\newblock The largest eigenvalue of finite rank deformation of large wigner
  matrices: convergence and non-universality of the fluctuations.
\newblock Technical report, \texttt{arXiv:math/0605624}, 2007.

\bibitem[F{\'e}ral and P{\'e}ch{\'e}(2007)]{FerPec07}
D.~F{\'e}ral and S.~P{\'e}ch{\'e}.
\newblock The largest eigenvalue of rank one deformation of large {W}igner
  matrices.
\newblock \emph{Comm. Math. Phys.}, 272\penalty0 (1):\penalty0 185--228, 2007.

\bibitem[Johnstone(2001)]{Johnstone01}
I.~Johnstone.
\newblock On the distribution of the largest eigenvalue in principal components
  analysis.
\newblock \emph{Ann. Statistics}, 29\penalty0 (2):\penalty0 295--327, 2001.

\bibitem[Mar\v{c}enko and Pastur(1967)]{MP}
V.A. Mar\v{c}enko and L.A. Pastur.
\newblock Distribution of eigenvalues for some sets of random matrices.
\newblock \emph{Math. USSR-Sb}, 1:\penalty0 457--483, 1967.

\bibitem[P{\'e}ch{\'e}(2006)]{Peche06}
S.~P{\'e}ch{\'e}.
\newblock The largest eigenvalue of small rank perturbations of {H}ermitian
  random matrices.
\newblock \emph{Probab. Theory Related Fields}, 134\penalty0 (1):\penalty0
  127--173, 2006.

\bibitem[Silverstein(1995)]{Silverstein95}
Jack~W. Silverstein.
\newblock Strong convergence of the empirical distribution of eigenvalues of
  large-dimensional random matrices.
\newblock \emph{J. Multivariate Anal.}, 55\penalty0 (2):\penalty0 331--339,
  1995.

\bibitem[Silverstein and Choi(1995)]{SilversteinChoi95}
Jack~W. Silverstein and Sang-Il Choi.
\newblock Analysis of the limiting spectral distribution of large-dimensional
  random matrices.
\newblock \emph{J. Multivariate Anal.}, 54\penalty0 (2):\penalty0 295--309,
  1995.

\end{thebibliography}

\vfill

\begin{figure}[ht]
  \begin{center}
    \includegraphics[width=0.5\textwidth,height=0.96\textwidth,angle=-90]
                    {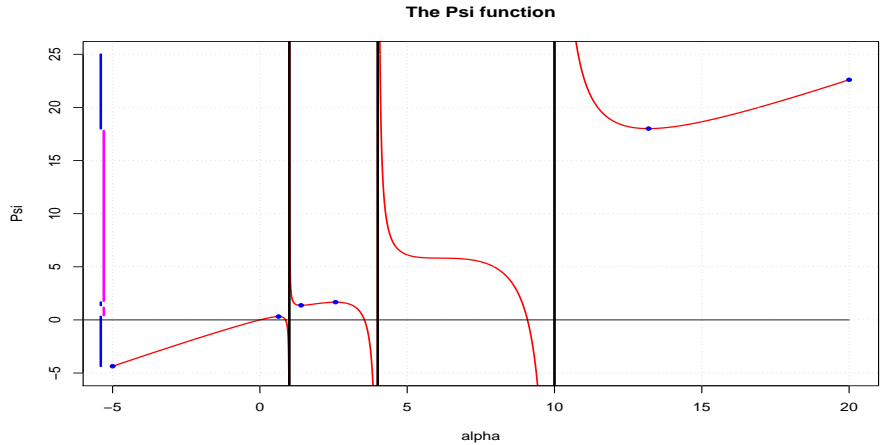}
  \end{center}
  \caption{\label{fig:psi}
     The $\psi$ function for the Mar\v{c}enko-Pastur distribution
     $F_{0.3,H}$ with $H$ the uniform distribution on the set
     $\{1,4,10\}$.
     Blue points indicate intervals where $\psi'>0$. Singular points
     of $\psi$ are indicated as
     vertical lines corresponding to the support  of $H$. On the
     left,
     the support set of $F_{0.3,H}$ (except the point 0) and its complementary set are
     indicated as magenta and blue segments respectively.}
\end{figure}


\begin{figure}[ht]
  \begin{center}
    \includegraphics[width=0.5\textwidth,height=0.96\textwidth,angle=-90]
                    {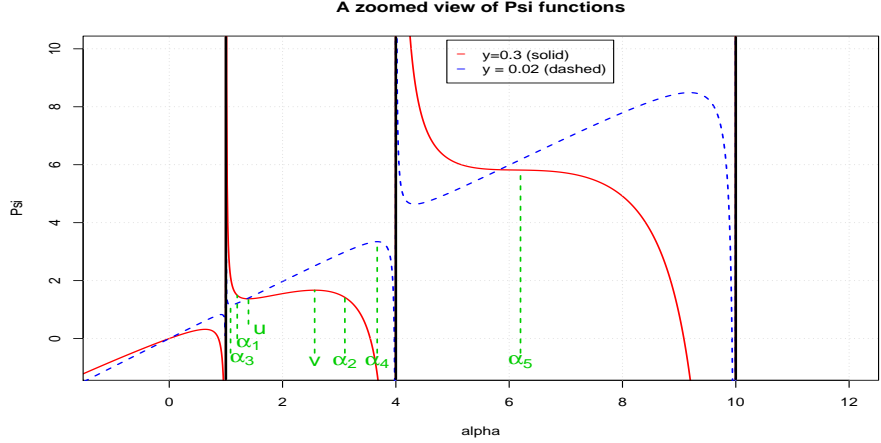}
  \end{center}
  \caption{\label{fig:psi3}
     A zoomed view  of the $\psi$ functions for the Mar\v{c}enko-Pastur distribution
     $F_{0.3,H}$ (solid curve) and $F_{0.02,H}$ (dashed curve) with
     $H$ the uniform distribution on the set
     $\{1,4,10\}$.  The  three   points
     $\al_{1}$, $\al_{2}$  and $\al_{5}$  are close spikes for $F_{0.3,H}$ where
     $\psi_{0.3,H}'\le 0$. They become all
     distant spikes for $F_{0.02,H}$ as $\psi_{0.02,H}' > 0$.}
\end{figure}

\begin{figure}[htp]
  \begin{center}
    \includegraphics[width=0.5\textwidth,height=0.9\textwidth,angle=-90]
                    {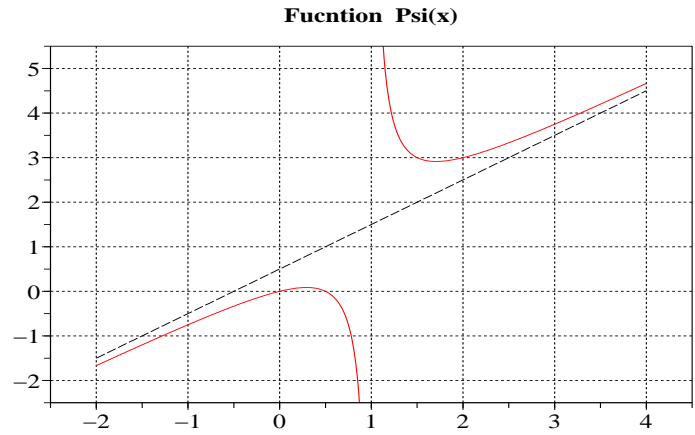}
  \end{center}
                  \caption{\label{fig:psiJohnstone}
    The function $\al\mapsto \psi(\al)=\al+ y\al/(\al-1)$ which maps a
    spike  eigenvalue $\al$ to the limit of an associated sample
    eigenvalue in Johnstone's spiked population model.\quad Figure with
    $y=\frac12$;  ~$[1\mp\sqrt y]=[0.293,~1.707]$;~$[(1\mp\sqrt y)^2]=[0.086,~2.914]$           .}
\end{figure}

\begin{figure}[htp]
  \begin{center}
    \includegraphics[width=0.3\textwidth,height=0.96\textwidth,angle=-90]
                    {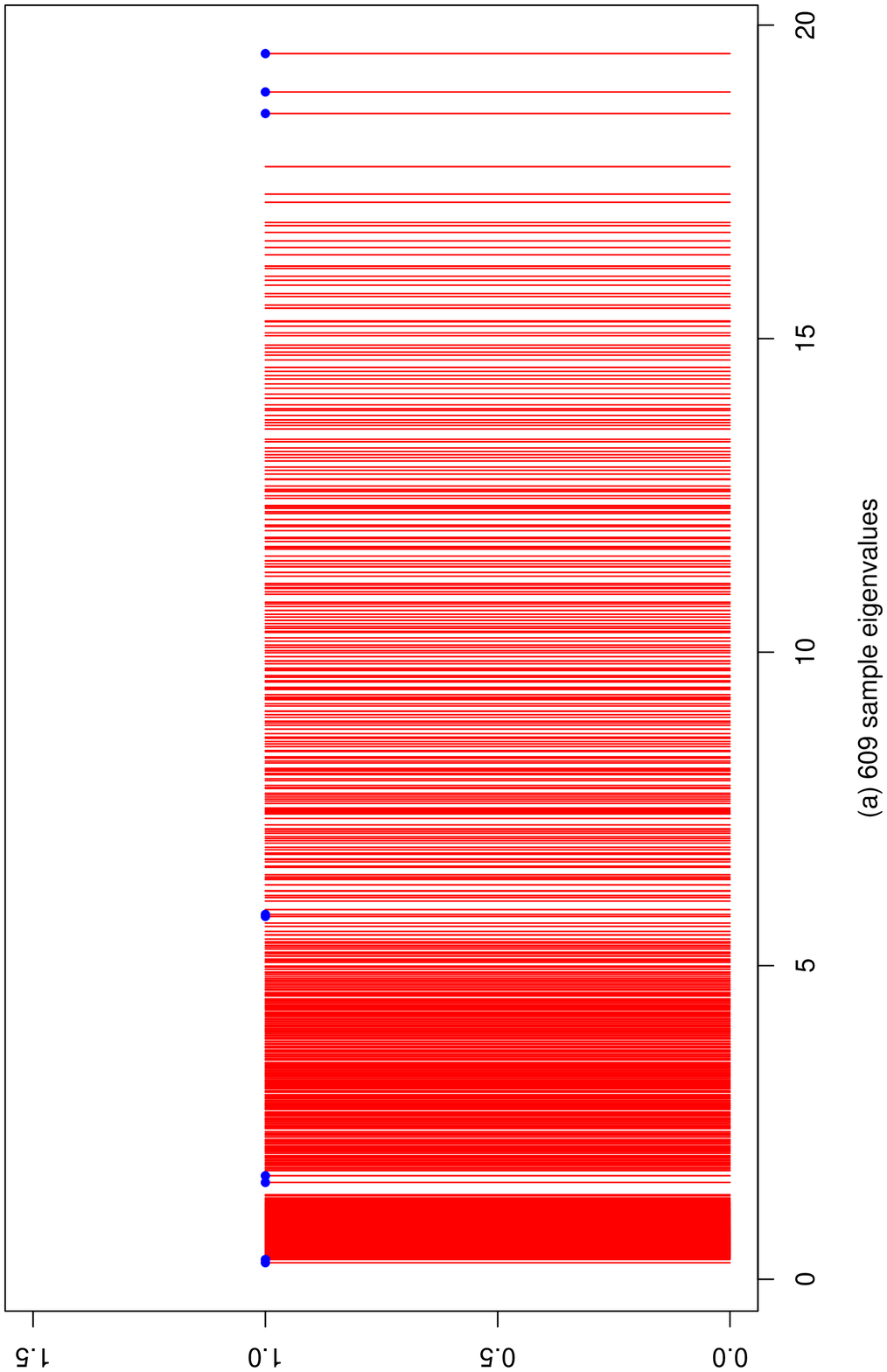}\\
    \includegraphics[width=0.3\textwidth,height=0.96\textwidth,angle=-90]
                    {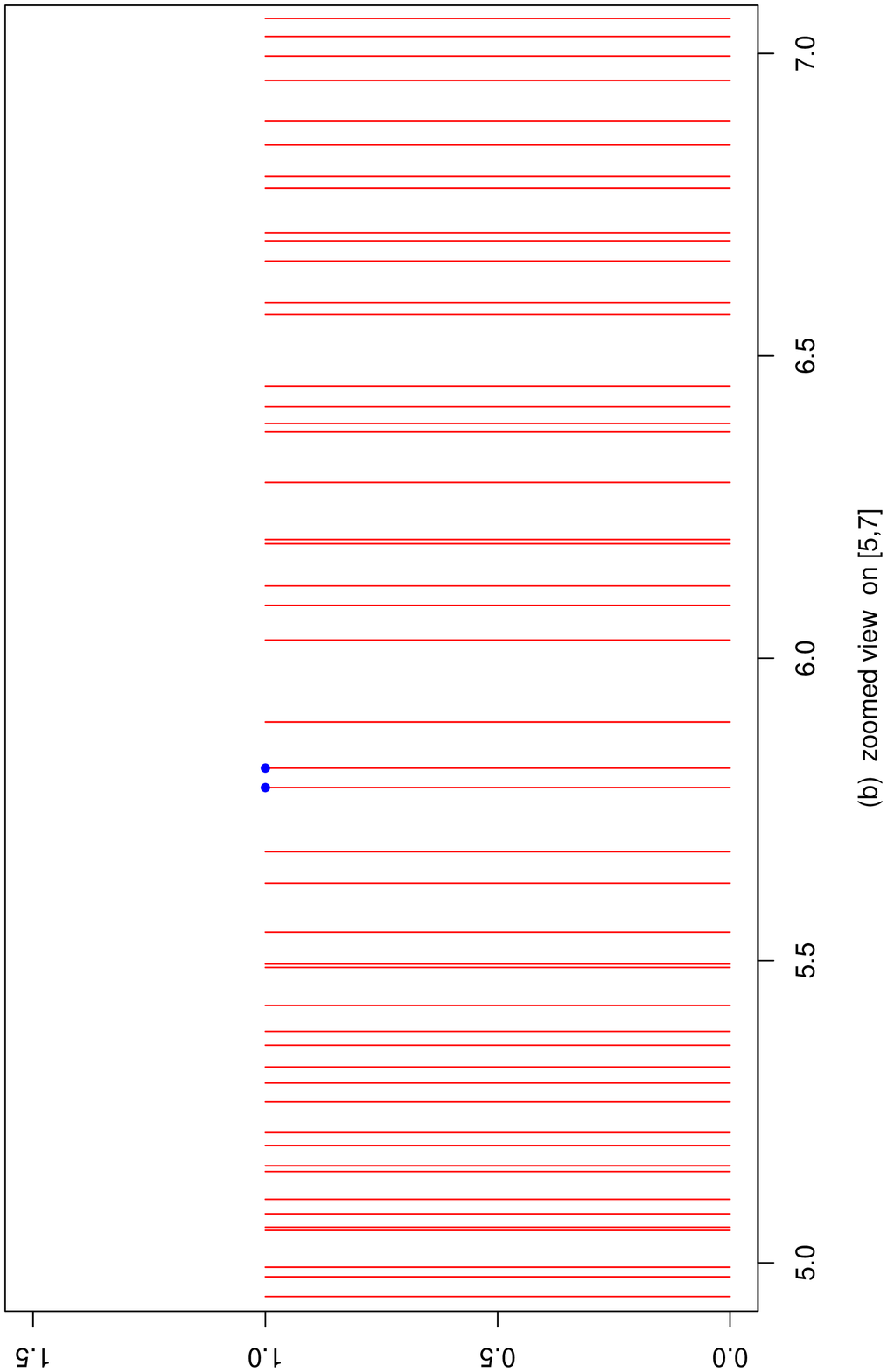}\\
    \includegraphics[width=0.3\textwidth,height=0.96\textwidth,angle=-90]
                    {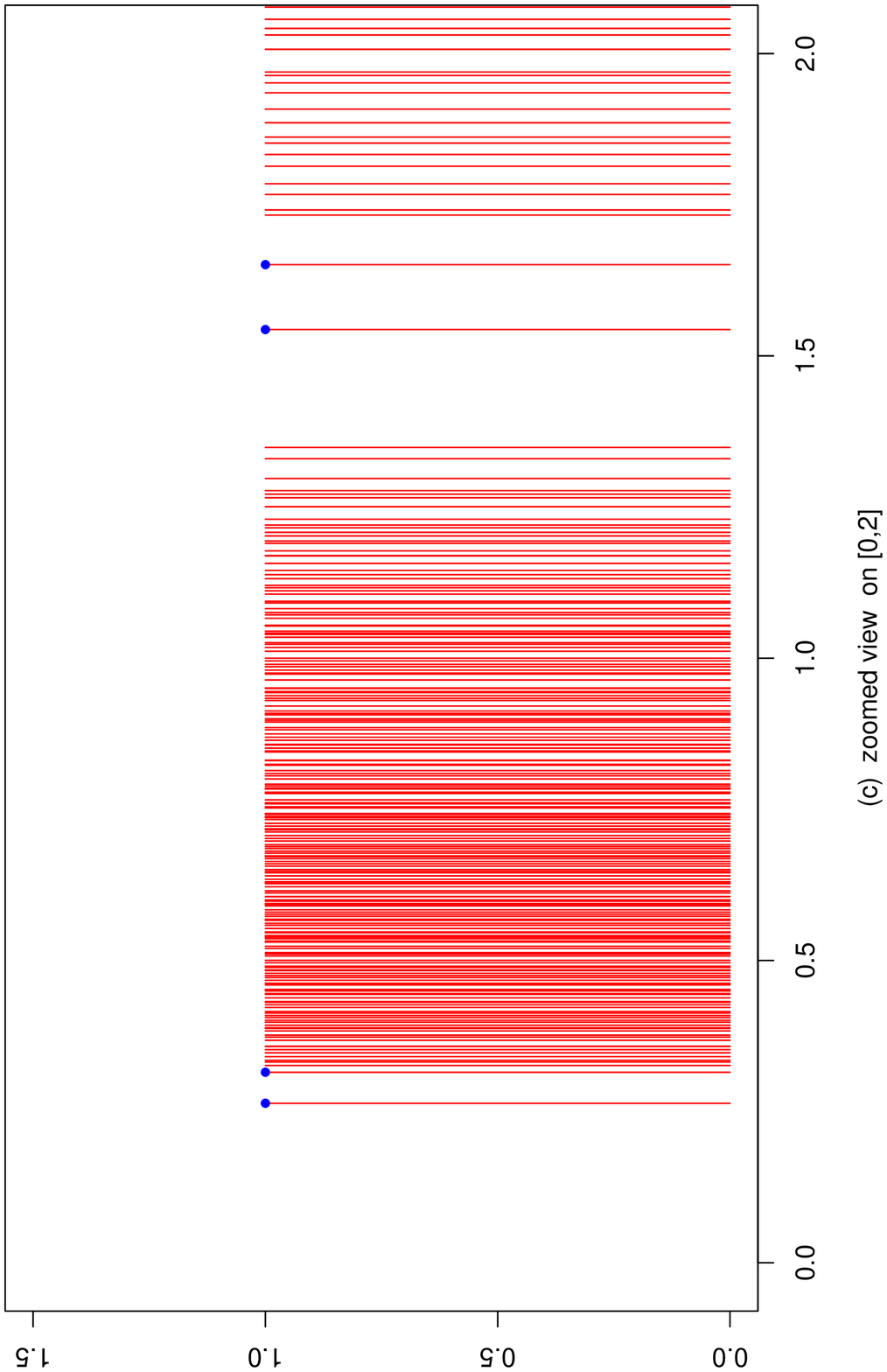}
  \end{center}
  \caption{\label{fig:example}
    An example of $p=609$ sample eigenvalues (a),  and  two zoomed
    views (b) and (c)  on
    [5,7]  and [0,2] respectively. The limiting distribution of the E.S.D has support
    $[0.32,  1.37] \cup  [1.67, 18.00]$.  The 9  sample  eigenvalues
    $\{\la_j^{S_n},~j=1,2,3,204,205,406,407,608,609~\}$ associated to
    the spikes are marked with a blue point. Gaussian entries.}
\end{figure}

\end{document}